\documentclass[11pt]{amsart}
\usepackage[dvipsnames]{xcolor}
\usepackage[left=1in,top=1in,right=1in,bottom=1in]{geometry}
\usepackage{wasysym, mathtools, amssymb, tikz, tikz-cd, mathrsfs}
\linespread{1.10}         
\usepackage[T1]{fontenc}
\usepackage[colorlinks=true, linkcolor=red!80!black, urlcolor=purple, citecolor=blue!70!black]{hyperref}

\usetikzlibrary{matrix,arrows}


\input xy 
\xyoption{all} %

\newtheorem{question}{Question}
\newtheorem{theorem}{Theorem}[section]
\newtheorem{lemma}[theorem]{Lemma}
\newtheorem{corollary}[theorem]{Corollary}
\newtheorem{proposition}[theorem]{Proposition}
\newtheorem{conjecture}[theorem]{Conjecture}

\theoremstyle{definition}
\newtheorem{definition}[theorem]{Definition}

\newtheorem{remark}[theorem]{Remark}

\def\owlset@enDeligne-Mumfordark{\ensuremath{\scriptscriptstyle\blacksquare}}

\hyphenpenalty=10000

\newcommand*{\DashedArrow}[1][]{\mathbin{\tikz [baseline=-0.25ex,-latex, dashed,#1] \draw [#1] (0pt,0.5ex) -- (1.3em,0.5ex);}}





\newcommand{\Aff}{{\mathbb A}}
\newcommand{\C}{{\mathbb C}}

\newcommand{\PP}{{\mathbb P}}
\newcommand{\Q}{{\mathbb Q}}
\newcommand{\R}{{\mathbb R}}

\def\bbar#1{\setbox0=\hbox{$#1$}\dimen0=.2\ht0 \kern\dimen0 
\overline{\kern-\dimen0 #1}}


\newcommand{\OO}{{\mathscr O}}



\DeclareMathOperator{\Pic}{Pic}








\begin{document}
\title{THE ERD\H{o}S-ULAM PROBLEM, LANG'S CONJECTURE, AND UNIFORMITY}
\author{Kenneth Ascher}
\email{kascher@princeton.edu}
\author{Lucas Braune}
\email{lvhb@uw.edu}
\author{Amos Turchet}
\email{amos.turchet@sns.it}

\begin{abstract}
A rational distance set is a subset of the plane such that the distance between any two points is a rational number.
We show, assuming Lang's Conjecture, that the cardinalities of rational distance sets in general position are uniformly bounded, generalizing results of Solymosi-de Zeeuw, Makhul-Shaffaf, Shaffaf, and Tao. In the process, we give a criterion for certain varieties with non-canonical singularities to be of general type.
\end{abstract}

\maketitle

\section{Introduction}\label{sec:intro}
A \emph{rational distance set} is a subset $S$ of $\R^2$ such that the distance between any two points of $S$ is a rational number. In 1946, Ulam posed the following, based on a result of Anning-Erd\H{o}s \cite{ae}:

\begin{question}
Is there a rational distance set that is dense for the Euclidean topology of $\R^2$?
\end{question}

While this problem is still open, Shaffaf \cite{shaffaf} and Tao \cite{tao} independently showed that Lang's Conjecture (Conjecture \ref{conj:lang} below) implies that the answer to the Erd\H os-Ulam question is `no'.
In fact, they showed that if Lang's Conjecture holds, a rational distance set cannot even be dense for the Zariski topology of $\R^2$, i.e. must be contained in a finite union of real algebraic curves. Solymosi and de Zeeuw \cite{sz} proved (unconditionally), using Faltings' proof of Mordell's conjecture, that a rational distance set contained in a real algebraic curve must be finite, unless the curve has a component which is either a line or a circle.
Furthermore, any line (resp. circle) containing infinitely many points of a rational distance set must contain all but at most four (resp. three) points of the set. 

One can rephrase the result of \cite{sz} as almost all points of an infinite rational distance set contained in a union of curves tend to concentrate on a line or circle. It is therefore natural to consider the ``generic situation'', and so motivated by \cite{sz}, we say a subset $S\subseteq \R^2$ of cardinality $n$ is in \emph{general position} if no subset of $S$ of cardinality $n-4$ is contained in a line and no subset of $S$ of cardinality $n-3$ is contained in a circle.

For example, a set of seven points in $\R^2$ is in general position if and only if no $7-4 = 3$ points lie on a line and no $7-3 = 4$ points lie on a  circle.

Before stating our main result, we note that assuming Lang's conjecture, rational distance sets in general position are necessarily finite, by the above mentioned results \cite{sz, shaffaf, tao}.

\begin{theorem}\label{thm:mainthm}
Assume Lang's Conjecture. There exists a uniform bound on the
  cardinality of a rational distance set in general position.
\end{theorem}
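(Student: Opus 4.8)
The plan is to run the Caporaso--Harris--Mazur strategy for extracting uniformity from Lang's Conjecture; the genuinely new work is a birational-geometric analysis of the relevant moduli varieties, which turn out to be singular with non-canonical singularities.

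\emph{Encoding.} Given $n$ points $p_1,\dots,p_n\in\R^2$ with all pairwise distances rational, normalize by an isometry so that $p_1=(0,0)$ and $p_2=(c,0)$ with $c\in\Q_{>0}$. A short computation then forces every $x$-coordinate to be rational and, provided the points are not all collinear (automatic for $n\ge 5$ by general position), forces all $y$-coordinates to lie in a single quadratic field $\Q(\sqrt t)$ with $t=y_{i_0}^2\in\Q$; writing $y_i=\eta_i\sqrt t$ with $\eta_i\in\Q$ turns the defining relations $d_{ij}^2=(x_i-x_j)^2+(y_i-y_j)^2$ into the polynomial relations $d_{ij}^2=(x_i-x_j)^2+t(\eta_i-\eta_j)^2$. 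Thus there is a quasi-projective variety $V_n$ over $\Q$, with coordinates $\bigl(c,t,(x_i),(\eta_i),(d_{ij})\bigr)$, such that a rational distance set of size $n$ in general position determines, up to finitely many sign and indexing choices, a rational point of a dense open subset of $V_n$. Since any subset of a set in general position is again in general position, it suffices to bound $n$.

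\emph{The fibration and its fibers.} Forgetting the last point yields a tower $\cdots\to V_n\to V_{n-1}\to\cdots$, whose fiber over a size-$m$ rational distance set $\mathbf p$ is the \emph{extension surface} $E(\mathbf p)$: its points record a new point $q=(x,\eta)$ together with the rational distances $d_1,\dots,d_m$ from $q$ to the points of $\mathbf p$. Projection to the $(x,\eta)$-plane exhibits a model of $E(\mathbf p)$ as the $(\Z/2\Z)^m$-cover of $\PP^2$ obtained by adjoining $\sqrt{(x-x_i)^2+t(\eta-\eta_i)^2}$, $i=1,\dots,m$. Over $\Qbar$ each radicand factors as a product of two lines through $(x_i,\eta_i)$ whose directions depend only on $t$, so the branch locus is a union of $2m$ lines, with $m$ of them passing through each of two fixed points $P_{+},P_{-}$ at infinity (the circular points). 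This extreme concurrency means that, no matter how one resolves $E(\mathbf p)$, the exceptional configuration over $P_\pm$ carries non-canonical singularities, and it is this feature that obstructs the naive argument.

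\emph{General type.} I would prove that there is a constant $N$ such that $E(\mathbf p)$ is a surface of general type for every $m\ge N$ and every $\mathbf p$ in general position. Away from $P_\pm$, the canonical-bundle formula for abelian covers gives $K\sim\pi^{\ast}\bigl((m-3)H\bigr)$, which is big already for $m\ge 4$; the difficulty is to show that the discrepancy corrections introduced when resolving the non-canonical singularities over $P_\pm$ do not destroy bigness of the canonical class on a smooth projective model. This is exactly where the criterion announced in the abstract enters: a statement ensuring that an abelian cover of a smooth surface, branched along a divisor that is sufficiently positive and otherwise in general position, is of general type even when its singularities are only log-canonical. Checking its hypotheses requires using general position of $\mathbf p$ to exclude branch lines that are concurrent away from $P_\pm$, coincident, or mutually tangent, and then carrying out the explicit resolution at $P_\pm$ and bookkeeping the discrepancies. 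I expect this step to be the main obstacle.

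\emph{Uniformity from Lang.} Fix $m=N$ and let $\mathcal{E}\to V_N$ be the universal extension surface, a family of surfaces of general type. Assuming Lang's Conjecture, the correlation theorem of Caporaso--Harris--Mazur, in the form for families of surfaces of general type due to Hassett, shows that for $r$ sufficiently large the $r$-fold fibered power $\mathcal{E}^{r}_{V_N}$ admits a dominant rational map to a variety of general type, so its rational points are not Zariski dense. On the other hand, a rational distance set $S$ of size $n$ in general position supplies, for each of its $\binom nN$ size-$N$ subsets $\mathbf p$ and each ordered $r$-tuple of distinct remaining points of $S$, a rational point of $\mathcal{E}^{r}_{V_N}$ lying over $\mathbf p\in V_N(\Q)$. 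If $n$ could be arbitrarily large, these points would be forced --- by the Hilbert-scheme and specialization argument of Caporaso--Harris--Mazur --- to be Zariski dense in some component of $\mathcal{E}^{r}_{V_N}$ dominating $V_N$ and having positive-dimensional fibers over its image, contradicting the non-density just established. Hence $n$ is bounded, which is Theorem \ref{thm:mainthm}. (The finitely many field and sign choices in the encoding mean one works with points of degree at most $2$ over $\Q$ throughout, which changes nothing essential.)
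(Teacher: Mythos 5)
Your encoding and your analysis of the ``extension surface'' are essentially the paper's: the paper also builds, from $m=4$ base points, the complete intersection of quadrics $r_j^2=(x-a_jz)^2+k(y-b_jz)^2$ in $\PP^6$, identifies the non-canonical ordinary multiple points over the two circular points at infinity, and proves general type via a discrepancy-bookkeeping criterion (Propositions \ref{prop:criterion-general-type} and \ref{prop:surf-gen-type}). Up to that point your plan is sound.

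The genuine gap is in your final step, ``Uniformity from Lang.'' The Caporaso--Harris--Mazur fibered-power counting argument, which for \emph{curves} converts non-density of rational points on fibered powers into a bound on the \emph{number} of points per fiber, does not go through for families of \emph{surfaces}: a single surface of general type can contain infinitely many rational points, concentrated on rational or elliptic curves inside it, and then $D^r\subseteq X_b^r$ sits happily inside the proper closed subset containing all rational points of the fibered power without any contradiction. What Lang plus the fibered-power machinery actually yields for surfaces is Hassett's theorem: a uniform bound on the \emph{degree} of the Zariski closure $\overline{X_b(\Q)}$, not on its cardinality. So your argument, as written, only shows that each rational distance set lies on a plane curve $Z$ of uniformly bounded degree (after projecting $\overline{E(\mathbf p)(\Q)}$ to $\PP^2$). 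Bounding $\#(S\cap C)$ for each component $C$ of $Z$ is a separate problem, and it is exactly here --- not in the general-type verification --- that the general position hypothesis is indispensable: lines and circles genuinely can carry unboundedly many points of a rational distance set, so one must use general position to cap their contribution, and for components of degree $\ne 2$ one must lift $S\cap C$ to rational points of an auxiliary curve of genus between $2$ and $d^2+1$ (a double cover of $C$ branched along six lines through three well-chosen points of $S$) and invoke Caporaso--Harris--Mazur uniformity \emph{for curves}, with constants depending only on $\deg C$. This entire second stage (the paper's Lemmas \ref{lemma:choice-of-points}, \ref{lemma:sz} and Proposition \ref{prop:curves}) is missing from your proposal, and without it the proof does not close.
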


Erd\H os famously asked whether there exists a rational distance set with seven elements in general position. To stress the parallel between Lang's Conjecture and Erd\H os' question, we emphasize the following consequence of our main result.

\begin{corollary}\label{cor:Lang}
If there exist rational distance sets in general position of cardinality larger than any fixed constant, then Lang's Conjecture does not hold.
\end{corollary}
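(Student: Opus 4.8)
The plan is to observe that Corollary~\ref{cor:Lang} is simply the logical contrapositive of Theorem~\ref{thm:mainthm}, so that once that theorem is established there is nothing left to do beyond unwinding quantifiers. Theorem~\ref{thm:mainthm} asserts the implication ``Lang's Conjecture $\Rightarrow$ there exists a constant $N_0$ such that every rational distance set in general position has at most $N_0$ elements''. The hypothesis of the corollary, that rational distance sets in general position exist of cardinality larger than any fixed constant, says precisely that for every $N$ there is such a set $S$ with $|S| > N$; this is exactly the negation of the conclusion of Theorem~\ref{thm:mainthm}, since no single $N_0$ can then bound all of them. Applying modus tollens yields that Lang's Conjecture fails.

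The only thing to check is that the two formulations of ``uniform bound'' are genuine negations of each other: in Theorem~\ref{thm:mainthm} the bound $N_0$ is quantified outside the universal quantifier ranging over all rational distance sets in general position, while ``cardinality larger than any fixed constant'' is read as ``$\forall N\, \exists S$ in general position with $|S| > N$''. These are exact negations, so no additional input is required, and in particular one does not need to revisit any of the geometry (the general type criterion, Lang's Conjecture for the relevant varieties, etc.) used to prove Theorem~\ref{thm:mainthm}.

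I do not expect any obstacle here: all of the mathematical content lies in Theorem~\ref{thm:mainthm}, and Corollary~\ref{cor:Lang} is recorded only to make explicit the parallel with Erd\H{o}s' question on the existence of a seven-point rational distance set in general position, reformulating the conditional statement in a way that turns a hypothetical construction of large configurations into a disproof of Lang's Conjecture.
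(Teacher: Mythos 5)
Your proposal is correct and matches the paper's (implicit) treatment: the corollary is simply the contrapositive of Theorem~\ref{thm:mainthm}, and the paper records it without separate proof for exactly the reason you state. Your check that the quantifier structures are genuine negations of each other is the only point of substance, and you handle it correctly.
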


Erd\H{o}s' question was answered in the affirmative by Kreisel and Kurz \cite{kk}. We are unaware of any examples of rational distance sets in general position of cardinality larger than seven. We do note, however,  that for sets of cardinality greater than seven, our notion of general position is strictly weaker than the one appearing previously e.g. in work of  \cite{kk}. For them, general position means no three points contained on a line, and no four on any circle.

Finally, we note that Pasten proved that the ABC conjecture also implies a negative answer to the Erd\H{o}s-Ulam problem \cite{pasten}.

\subsection{Method of proof} To prove Theorem \ref{thm:mainthm}, we note that points of rational distance sets lift to rational points of curves and surfaces of general type, (see \cite{sz, tao}).
We then show that Lang's Conjecture implies uniform bounds for these sets of rational points, using results of Caporaso-Harris-Mazur \cite{chm} and Hassett \cite{hassett}. The idea of using uniformity to study rational distance sets first appeared in the paper of Makhul and Shaffaf \cite{ms}, who used  uniformity for curves \cite{chm}. To obtain our Theorem \ref{thm:mainthm} (which generalizes \cite{ms}), we use both uniformity for curves \cite{chm} as well as uniformity for surfaces \cite{hassett}.

Our proof of Theorem \ref{thm:mainthm} uses a result that certain (singular) complete intersections of four quadrics in $\PP^6$ are of general type. This result was first shown by Tao \cite{tao} (see also \cite{SK}). We include in this paper a more general statement (Proposition \ref{prop:criterion-general-type} below), which implies Tao's result and we think may be of interest in its own right. 

\subsection*{Acknowledgments}
We learned about this problem from Tao's blog post \cite{tao} and a comment of J\'ozsef Solymosi. We thank S\'andor Kov\'acs and Bianca Viray for useful conversations, and J\'ozsef Solymosi and Sascha Kurz for helpful comments on a draft of this paper. K.A. is supported in part by an NSF postdoctoral fellowship. A.T. is supported in part by funds from NSF grant DMS-1553459. We thank the referees for their many helpful comments which greatly improved this paper.

\section{Background}

Faltings proved in \cite{faltings} that a curve of genus at least two can have only finitely many points with rational coordinates (or more generally with coordinates in a number field). Varieties of general type provide the natural generalization of curves of genus at least two to higher dimensional varieties. 

\begin{definition}\label{def:generaltype} A projective variety $X$ is of general type if for any desingularization $\widetilde{X} \to X$ the canonical divisor $\omega_{\widetilde{X}}$ is big. \end{definition}

 In Lemma \ref{lemma:can-gen-type} and Proposition \ref{prop:criterion-general-type} we will see criteria for a variety to be of general type. Using this notion, Lang (and Bombieri for surfaces) proposed a generalization of Faltings' theorem for varieties of higher dimension.

\begin{conjecture}[Lang's Conjecture]\label{conj:lang} Let $X$ be a projective variety of general type defined over a number field $K$. Then the set of rational points $X(K)$ is not Zariski dense in $X$.\end{conjecture}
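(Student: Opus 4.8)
\emph{A remark on status.} Conjecture~\ref{conj:lang} is Lang's Conjecture, one of the central open problems of Diophantine geometry; no unconditional proof is known, so what follows is not a proof but a sketch of the single tool that is actually available, of precisely where it stops, and of the conjectural frameworks into which any hypothetical proof would have to fit.

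The only robust Diophantine input is Faltings' work. In dimension one a projective variety of general type is a smooth model of a curve of genus $\geq 2$, and Faltings' proof of the Mordell Conjecture is exactly the statement. In higher dimension, Faltings' theorem on subvarieties of abelian varieties disposes of every $X$ for which a desingularization $\widetilde X$ has Albanese map $\alpha\colon \widetilde X \to \mathrm{Alb}(\widetilde X)$ generically finite onto its image: since $X$ is of general type, $\overline{\alpha(\widetilde X)}$ is not a translate of an abelian subvariety, so by Faltings its $K$-points lie in a proper closed subset, and pulling back shows $X(K)$ is not Zariski dense. So the plan would be: first reduce to the case of Albanese dimension strictly less than $\dim X$, and more generally try to peel off, one at a time, any fibration $\widetilde X \dashrightarrow Y$ over a positive-dimensional base with nontrivial Albanese to which the abelian case applies, inducting on the generic fiber. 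Even this reduction step is not unconditional — that the process terminates is itself part of the conjectural picture, tied to Campana's theory of special varieties and to Lang's conjectural core.

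The genuinely hard case is a variety of general type carrying no useful map to an abelian variety — the prototype being a smooth hypersurface of large degree in $\PP^n$, whose Albanese is trivial. Here there is no Diophantine theorem to invoke, and a proof would have to pass through one of the conjectural bridges: Vojta's height inequality for the pair $(\widetilde X, K_{\widetilde X})$, which implies Lang's Conjecture and is the arithmetic mirror of the Second Main Theorem of Nevanlinna theory, or an arithmetic incarnation of the Green--Griffiths--Lang hyperbolicity program, showing that rational points are confined to the same proper exceptional locus conjectured to contain all entire curves. The analytic side of that program is known for hypersurfaces of sufficiently high degree and in various function-field settings, which at least predicts what the exceptional locus should be; but no one has transported those arguments, which rely on jet differentials and positivity of the cotangent bundle, to the arithmetic setting. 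That transfer — equivalently, proving essentially any case of Vojta's Conjecture beyond what reduces to Faltings' theorems — is the obstacle, and it is completely open. This is why the present paper, like its predecessors, takes Conjecture~\ref{conj:lang} as a hypothesis rather than a theorem.
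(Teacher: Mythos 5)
You correctly recognize that Conjecture~\ref{conj:lang} is an open conjecture with no known proof, and the paper treats it exactly the same way: it is stated as a hypothesis, and the text merely notes that the only settled cases are curves (Faltings' Theorem) and subvarieties of abelian varieties (Faltings again), precisely the two inputs your survey identifies. Your additional remarks on Albanese reductions, Vojta's conjecture, and the Green--Griffiths--Lang program are accurate context but go beyond what the paper says; there is no gap, since no proof is being claimed on either side.
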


Conjecture \ref{conj:lang} is known in full generality only for curves, where it reduces to Faltings' Theorem \cite{faltings} and subvarieties of abelian varieties \cite{faltings_ab}. The conjecture of Lang has a number of striking implications. One of the more spectacular ones is the following theorem of Caporaso-Harris-Mazur.

\begin{theorem}[{\cite[Theorem 1.1]{chm}}]
\label{thm:chm}
Assume Conjecture \ref{conj:lang}.
For each number field $K$ and positive integer $g\ge 2$, there exists an integer $B(K, g)$ such that no smooth curve of genus $g$ defined over $K$ has more than $B(K, g)$ rational points.
\end{theorem}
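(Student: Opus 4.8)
\emph{Proof proposal.} This is the theorem of Caporaso--Harris--Mazur, so the plan is to recall the structure of their deduction of the uniform bound from Lang's Conjecture. Write $N(K,g)=\sup\{\#C(K) : C \text{ a smooth projective curve of genus } g \text{ over } K\}$; the goal is $N(K,g)<\infty$. The three ingredients are a reduction to uniformity in a single family, a geometric ``Correlation Theorem'', and a Noetherian induction driven by Lang's Conjecture. For the reduction, use tricanonical embeddings: fix a locally closed subscheme $B$ of the Hilbert scheme of genus-$g$ curves in $\PP^{5g-6}$, of finite type over $\Q$, together with its universal curve $X\to B$, arranged so that every smooth genus-$g$ curve over $K$ occurs as $X_b$ for some $b\in B(K)$. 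A curve $C=X_b$ carrying rational points $P_1,\dots,P_n$ then furnishes a $K$-rational point $(b;P_1,\dots,P_n)$ of the $n$-fold fibered power $X^n_B := X\times_B\cdots\times_B X$. It therefore suffices to bound, uniformly in $n$ and over all $b\in B(K)$, the number of $K$-points in each fiber of $X^n_B\to B$.

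The key input I would invoke is the Correlation Theorem: for any integral variety $B$ in characteristic zero equipped with a family $X\to B$ whose generic fiber is a smooth curve of genus $g\ge 2$, there exist an integer $n$ and a dominant rational map $h\colon X^n_B\dashrightarrow W$ onto a positive-dimensional variety $W$ of general type that is \emph{fiberwise nondegenerate}: the restriction of $h$ to a general fiber of the projection $X^n_B\to X^{n-1}_B$ is nonconstant. (When $B$ is a point this is trivial, since a curve of genus $\ge 2$ is already of general type; the content is that general type ``accumulates'' in fibered powers even over bases that are not themselves of general type.)

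With this in hand, one applies Lang's Conjecture: $W(K)$ is not Zariski dense, so the $K$-rational points of $X^n_B$ all lie in the proper closed subset $Z$ obtained as the union of the indeterminacy locus of $h$ with the $h$-preimage of the Zariski closure of $W(K)$. Fiberwise nondegeneracy forces each component of $Z$ either to fail to dominate $B$ or to dominate it with strictly smaller relative dimension. Viewing such a component as sitting inside a fibered power over a larger base --- some $X^k_B$, over which the same curve family still lives --- and re-applying the Correlation Theorem after shrinking so that the generic fibers are again curves of genus $\ge 2$, a Noetherian induction on the dimension of the base together with the relative dimension of the tower $X^n_B\to\cdots\to B$ shows that there is an $n_0=n_0(K,g)$ beyond which the fibers of $X^n_B\to B$ over $K$-points carry no $K$-points at all; equivalently no genus-$g$ curve over $K$ has $n_0$ rational points, so $N(K,g)\le n_0$.

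The main obstacle is the Correlation Theorem itself. Its proof rests on positivity properties of direct images of relative pluricanonical sheaves --- Viehweg's weak positivity and results of Koll\'ar --- which show that, after a base change and a birational modification, $X^n_B$ is of general type whenever $B$ is, and more generally carries enough pluricanonical forms in the fiber directions to produce the map $h$ onto a variety $W$ of general type; the hypothesis $g\ge 2$ is precisely what endows the relative dualizing sheaf with the positivity needed for this accumulation. The second delicate point is the bookkeeping in the induction above: one must verify that each stratum of the bad locus $Z$ has strictly smaller invariants \emph{and} still satisfies the hypotheses of the Correlation Theorem after the requisite shrinking and the passage through loci where moduli fail to be fine, so that the induction is well founded and terminates.
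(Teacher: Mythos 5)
The paper does not prove this statement: it is quoted verbatim as Theorem~1.1 of Caporaso--Harris--Mazur and used as a black box, so there is no internal proof to compare against. Your outline is, as far as it goes, a faithful reconstruction of the actual CHM argument: the reduction via tricanonical embeddings to a finite-type family $X\to B$ containing all smooth genus-$g$ curves over $K$, the passage to fibered powers $X^n_B$, the Correlation (fibered-power) Theorem producing a dominant rational map to a positive-dimensional variety of general type, and the Noetherian induction that combines Lang's Conjecture with the nondegeneracy of that map to strip off rational points stratum by stratum. Two caveats. First, your proposal is a sketch rather than a proof: the entire mathematical content lives in the Correlation Theorem, which you correctly attribute to positivity of direct images of relative pluricanonical sheaves (Viehweg, Koll\'ar) but do not establish; as written, the argument is a reduction of one cited theorem to another. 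Second, your statement of fiberwise nondegeneracy (``nonconstant on a general fiber of $X^n_B\to X^{n-1}_B$'') is weaker than what CHM prove and use --- their map is generically finite on the general fiber of $X^n_B\to B$ --- and the weaker version would complicate the bookkeeping in your induction, since a map that is merely nonconstant in the last coordinate need not force the dimension drop you invoke for every component of the bad locus $Z$. Neither point is a fatal error for the purpose at hand (citing an established theorem), but a self-contained proof would have to supply the Correlation Theorem and the precise inductive lemma.
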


Following Caporaso-Harris-Mazur, Hassett proved the following implication of the Lang conjecture for surfaces that will be used in this paper \cite{hassett}.

\begin{theorem}[{\cite[Theorem 6.2]{hassett}}]
\label{thm:hassett}
Assume Conjecture \ref{conj:lang}.
Let $X\to B$ be a flat family of projective surfaces defined over a number field $K$, such that the general fiber is an integral surface of general type.
For any $b\in B(K)$ for which $X_b$ is of general type, let $N(b)$ be the sum of the degrees of the components of $\overline{X_b(K)}$. Then $N(b)$ is bounded by a constant independent of $b$.
\end{theorem}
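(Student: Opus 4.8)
The plan is to adapt the method that Caporaso--Harris--Mazur \cite{chm} used for curves to the surface case. First I would reduce to a statement over a dense open of the base: since $B$ is Noetherian, it suffices to produce, for the given family, a dense open $U \subseteq B$ together with a bound on $N(b)$ valid for all $b \in U(K)$ with $X_b$ of general type, and then to replace $B$ by $B \setminus U$ and repeat; the process terminates after finitely many steps and the final answer is the maximum of the finitely many bounds produced. So I may assume $X \to B$ is irreducible over $K$ with general fiber an integral surface of general type, and I seek a bound on $N(b)$ over a dense open of $B$.

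The engine is a correlation theorem for fibered powers. For $n$ sufficiently large, the $n$-fold fibered power $X^{(n)} := X \times_B \cdots \times_B X$ --- after passing to a resolution and restricting to the open locus of tuples of pairwise distinct points --- admits a dominant rational map onto a positive-dimensional variety of general type. Granting this, Lang's Conjecture (Conjecture \ref{conj:lang}) forces the $K$-points of that target into a proper closed subvariety; pulling back along the dominant rational map, one concludes that $X^{(n)}(K)$ is not Zariski dense, so there is a proper closed $W \subsetneq X^{(n)}$ with $X^{(n)}(K) \subseteq W$. Shrinking $B$ to a dense open $U$ over which the fibers of $W$ remain proper in $X_b^n$, this says: for every $b \in U(K)$, any $n$-tuple of pairwise distinct points of $X_b(K)$ lies in $W_b \subsetneq X_b^n$.

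Next I would convert this into a bound on $N(b)$. Since $X_b$ is of general type, Lang's Conjecture already gives that $\overline{X_b(K)}$ is a proper subvariety, hence a finite union of curves $C_1, \dots, C_k$ together with finitely many isolated points, each $C_i$ with infinitely many rational points having geometric genus at most $1$ by Faltings' Theorem; the goal is to bound $k$ and $\sum_i \deg C_i$. The point is that if $\sum_i \deg C_i$ were large one could choose $n$ rational points of $X_b$ distributed among distinct components and in sufficiently general position that the resulting point of $X_b^n$ avoids $W_b$, contradicting the previous step; hence a bound of the required shape exists. Making this rigorous involves stratifying $U$ by the numerical type and incidence pattern of the configuration of curves $C_i$ --- working, for instance, over relative Hilbert or Chow schemes of curves of bounded degree in the fibers --- and tracking how the rational points distribute among them. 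The genuinely new difficulty relative to \cite{chm} is that the diagonals in $X \times_B X$ have codimension $2$ rather than $1$, so deleting them does not drop dimension as favorably, and collisions among the coordinates of a tuple must be handled with more care.

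I expect the main obstacle to be the correlation theorem for surfaces, not the combinatorial descent. Showing that the fibered powers of a family of general-type surfaces eventually become of general type relies on positivity of the relative pluricanonical sheaves $\omega_{X^{(n)}/B}^{\otimes m}$, which can degenerate over non-reduced or otherwise pathological fibers; one must therefore first replace $X \to B$ by a model with sufficiently mild fibers --- after a base change and a birational modification, via a relative semistable-type reduction --- and then propagate bigness of the relative canonical sheaf through the fibered-power construction and compare it with bigness of $\omega$ on a resolution of $X^{(n)}$. This is precisely the geometric content established in \cite{hassett} on the way to \cite[Theorem 6.2]{hassett}, and it is where the surface case substantially exceeds the difficulty of the curve case.
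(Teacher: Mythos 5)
The paper does not prove this statement; it is quoted verbatim as \cite[Theorem 6.2]{hassett} and used as a black box, so there is no internal proof to compare against. Your sketch accurately reproduces the architecture of Hassett's actual argument (itself modeled on \cite{chm}): a correlation theorem showing that a high fibered power of the family dominates a positive-dimensional variety of general type, an application of Lang's Conjecture to that target to force $K$-points of the fibered power into a proper closed subset, Noetherian induction on the base, and a combinatorial descent converting non-density of tuples into a bound on the sum of degrees of $\overline{X_b(K)}$. You are also right to flag the two genuinely hard points --- establishing positivity of relative pluricanonical sheaves after semistable-type reduction so that the fibered powers are of general type, and handling the codimension-$2$ diagonals in the descent step --- but your write-up only names these and defers them to \cite{hassett} rather than proving them, so this should be read as a correct roadmap of the known proof rather than a self-contained argument. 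Since the paper itself treats the theorem as an external input, that level of detail is consistent with how the result is used here.
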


Analogous statements were proven for (stably) integral points in \cite{Aell} and \cite{AdVT}; see \cite{ATnotes} for a survey of these and related results.

\section{Uniform bounds for rational distance sets}

The goal of this section is to give a proof of Theorem \ref{thm:mainthm}, modulo the claim that a certain surface is of general type, which will be proved as Proposition \ref{prop:surf-gen-type} below. We begin with a well known lemma that says that rational distance sets may be brought to a standard form after translations, rotations and rational dilations. For a proof, see \cite[Lemma 2]{shaffaf}.

\begin{lemma}
\label{lem:lattice}
Let $S\subseteq \R^2$ be a rational distance set.
If $S$ contains $(0,0)$ and $(1,0)$, then there exists an integer $k > 0$ such that $S \subseteq \{(a,b\sqrt k)~|~a,b\in \Q\}$.
\end{lemma}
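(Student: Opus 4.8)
The plan is to extract, for each point of $S$, constraints on its coordinates from the rationality of its distances to the two distinguished points $(0,0)$ and $(1,0)$, and then to use the rationality of the \emph{pairwise} distances among points of $S$ to force all the ``irrational parts'' of the second coordinates into a single quadratic field $\Q(\sqrt k)$. First I would fix an arbitrary $P = (x,y) \in S$ and record $x^2 + y^2 = r^2$ and $(x-1)^2 + y^2 = s^2$ with $r,s \in \Q_{\ge 0}$ (the squares of the distances of $P$ to $(0,0)$ and to $(1,0)$). Subtracting gives $2x - 1 = r^2 - s^2$, so $x \in \Q$, and then $y^2 = r^2 - x^2 \in \Q_{\ge 0}$. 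Hence every point of $S$ has rational first coordinate and a second coordinate whose square is a non-negative rational.

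If every point of $S$ has second coordinate $0$ then $S$ lies on the $x$-axis and $k = 1$ works, so assume there is a point $P_0 = (x_0, y_0) \in S$ with $y_0 \neq 0$. Write $y_0^2 = m/n$ in lowest terms with $m,n$ positive integers and let $k$ be the squarefree part of $mn$; then $\sqrt{mn} = c\sqrt k$ for a positive integer $c$, so $y_0 = \frac{c}{n}\sqrt k$. In particular both $y_0$ and $1/y_0$ lie in $\Q^\times\cdot\sqrt k$.

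It then remains to show that every $P = (x,y) \in S$ with $y \neq 0$ satisfies $y \in \Q\cdot\sqrt k$ (points with $y = 0$ are trivially of this form). Here I would use that $|PP_0|$ is rational: $(x-x_0)^2 + (y - y_0)^2 = t^2$ with $t \in \Q_{\ge 0}$. Since $x, x_0 \in \Q$ and $y^2, y_0^2 \in \Q$ by the first step, expanding yields $-2yy_0 = t^2 - (x-x_0)^2 - y^2 - y_0^2 \in \Q$, so $yy_0 \in \Q$ and therefore $y = (yy_0)\cdot(1/y_0) \in \Q\cdot(\Q^\times\sqrt k) = \Q\cdot\sqrt k$. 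Writing $x = a$ and $y = b\sqrt k$ with $a,b\in\Q$ completes the proof.

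The argument is entirely elementary; the one point worth isolating is that rationality of the distances to the two base points by itself does not pin down a common $k$ — one must additionally invoke a pairwise distance $|PP_0|$ to link the quadratic irrationalities of different points of $S$. Everything else is routine bookkeeping with squarefree parts.
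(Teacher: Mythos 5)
Your proof is correct and is the standard argument (the paper does not prove this lemma itself but cites \cite[Lemma 2]{shaffaf}, whose proof proceeds in essentially the same way): rationality of distances to $(0,0)$ and $(1,0)$ gives $x\in\Q$ and $y^2\in\Q$ for every point, and rationality of the distance to a fixed point $P_0$ with $y_0\neq 0$ gives $yy_0\in\Q$, forcing all second coordinates into $\Q\sqrt{k}$ for the squarefree $k$ determined by $y_0$. Your closing remark correctly isolates the one non-routine point, namely that a pairwise distance is needed to tie the different quadratic irrationalities together.
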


We will let $x,y,z$ be the coordinates on $\PP^2_\C$ and identify $\R^2$ with the set of real points of the affine open subset of $\PP_\C^2$ where $z=1$. The following lemma will be crucial in the proof of Lemma \ref{lemma:sz}.

\begin{lemma}
\label{lemma:choice-of-points}
Let $C\subset \PP^2_\C$ be an irreducible curve of degree $d$ defined over $\R$.
Let $S\subseteq \R^2$ be a subset.
Given $P := (a,b)\in \R^2$, let $\ell_P\subseteq \PP^2_\C$ be the complex line with equation $(x-az) + i(y-bz)=0$, and let $\bar\ell_P$ be its conjugate.
Suppose that either
\begin{enumerate}
\item $d=1$ and $\#(S\setminus C)\ge 5$; or
\item $d\ge 3$ and $\#(C\cap S)\ge d(d-1) + \tfrac 5 2 d + 1$.
\end{enumerate}
Then there exist three points $P_1, P_2, P_3\in S$ such that the intersection of $C$ with the union of six lines
$\cup_{j=1}^3 \ell_{P_j} \cup \bar\ell_{P_j}$
is transverse at $\geq \max(3(d-2), 6)$ points. In particular, the intersection is everywhere transverse if $d=1$.
\end{lemma}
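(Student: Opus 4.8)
The plan is to produce the three points $P_1,P_2,P_3$ greedily, controlling at each stage how many intersection points of $C$ with the already-chosen lines $\ell_{P_j},\bar\ell_{P_j}$ can fail to be transverse. The basic mechanism is this: for a fixed point $P=(a,b)\in\R^2$, the line $\ell_P$ passes through the circular point $I=[1:i:0]$ at infinity, and $\bar\ell_P$ passes through $\bar I=[1:-i:0]$; moreover every such $\ell_P$ is distinct (they are parametrized by $P$), and similarly for the $\bar\ell_P$. A point $Q\in C$ at which $C\cap\ell_P$ is non-transverse is either (i) a singular point of $C$, (ii) a point where $\ell_P$ is tangent to $C$, or (iii) a point of $C$ lying on more than one of the chosen lines (this last is only relevant once we have picked several $P_j$). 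I would bound each of these contributions separately.

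\emph{Step 1: singular points.} Since $C$ is irreducible of degree $d$, its arithmetic genus bound gives at most $\binom{d-1}{2}=\tfrac12(d-1)(d-2)$ singular points, counted appropriately. These are bad points no matter which $P_j$ we choose, so they form a fixed "forbidden set'' $\Sigma$ of size $\le \tfrac12(d-1)(d-2)$.

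\emph{Step 2: tangency and coincidence, chosen one point at a time.} Suppose $P_1,\dots,P_{k-1}$ have been chosen and we wish to choose $P_k\in S$. For a candidate $P\in S$, the line $\ell_P$ can be tangent to $C$ at a point of $C\setminus\Sigma$; the number of tangent lines to $C$ through the fixed point $I$ is the class of $C$, which is $\le d(d-1)$, but what matters is that for $P$ in $S$, the locus of $P$ for which $\ell_P$ is tangent to $C$ at some specified smooth point $Q$ is determined by $Q$ (namely $\ell_P$ must equal the tangent line $T_QC$, and $\ell_P$ is the unique line through $I$ and $Q$), so each smooth point $Q\in C$ "kills'' at most one choice of $P$. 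Likewise $\ell_P$ (resp. $\bar\ell_P$) can pass through a point of $C$ already lying on one of the previously chosen lines, and there are at most $2(k-1)\cdot d$ such points on $C$; each such point again kills at most one $P$ via the line through it and $I$ (resp. $\bar I$), and one more via $C\cap \ell_P$ acquiring a coincidence with a previous line at a point of $C$ — a bookkeeping of a bounded number of exclusions. Summing, choosing $P_k$ excludes at most some explicit linear-in-$d$ number of elements of $S$ (roughly $d(d-1)+$ a constant multiple of $d$, which is where the hypothesis $\#(C\cap S)\ge d(d-1)+\tfrac52 d+1$ is calibrated to leave room for three successive choices), so a valid $P_k$ exists.

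\emph{Step 3: counting the transverse points produced.} Once $P_1,P_2,P_3$ are chosen avoiding all the bad loci above, I count transverse intersection points. Each line $\ell_{P_j}$ meets $C$ in $d$ points with multiplicity; subtracting the contribution at $\Sigma$ and at the (at most one, by Step 2, since $\ell_{P_j}$ passes through $I\in$ the circular point which is generically not on $C$ unless $C$ is a circle — handle the line and circle cases, which are exactly the excluded geometry, separately) tangency, and the coincidences among the six lines, gives a lower bound of the asserted form: when $d=1$ the single intersection point is automatically transverse (a line meets an irreducible curve of degree $1$, i.e. another line, in one point, transversely unless equal, and the hypothesis $\#(S\setminus C)\ge 5$ guarantees we can pick $P_j\notin C$), yielding "everywhere transverse''; for $d\ge 3$ one gets at least $3(d-2)$ transverse points on the six lines (two per line after removing at most a tangency and singular contributions), and at least $6$ when $d$ is small, whence the $\max(3(d-2),6)$.

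\emph{Main obstacle.} The genuine difficulty is the careful accounting in Step 2–3 of all the ways a point of $C$ can fail transversality once several of the six lines are in play — in particular coincidences of the form $\ell_{P_i}=\bar\ell_{P_j}$ or three of the lines meeting on $C$ — and checking that the numerology $d(d-1)+\tfrac52 d+1$ really does leave enough points in $C\cap S$ after three rounds of exclusions, while simultaneously verifying that the excluded "circle'' geometry (where $I,\bar I\in C$) is precisely the degenerate case the statement is allowed to avoid. The rest is the standard Bézout/genus bookkeeping.
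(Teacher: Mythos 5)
Your overall strategy (a greedy choice of $P_1,P_2,P_3$ plus B\'ezout/dual-curve bookkeeping) is the same as the paper's, but as written the argument has a genuine gap at the point you flag parenthetically in Step 3. You assert that the circular point $I=[1:i:0]$ is ``generically not on $C$ unless $C$ is a circle'' and treat $I\in C$ as excluded geometry. That is false: an irreducible real curve of degree $d\ge 3$ can perfectly well pass through the circular points, and nothing in the hypotheses rules this out (only $d=2$ is excluded). Since \emph{every} line $\ell_P$ passes through $I$, if $C$ has multiplicity $\mu\ge 1$ at $I$ then each of the three lines $\ell_{P_j}$ meets $C$ non-transversely at $I$ and loses up to $\mu$ of its $d$ intersection points there. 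Handling this is the core of the paper's $d\ge 3$ argument: intersecting $C$ with the line at infinity gives $\mu\le d/2$, so each $\ell_{P_j}$ still contributes $d-\mu-1\ge \tfrac12(d-2)$ residual transverse points away from $I$ and $P_j$, and six lines give the stated $\max(3(d-2),6)$. Your count of ``two per line'' neither accounts for the loss at $I$ nor produces $3(d-2)$. A related omission: in case (2) the points $P_j$ must be taken from $C\cap S$ (that is all the hypothesis supplies), so each $P_j$ lies on $C$ and is itself an unavoidable non-transverse point of the union $\ell_{P_j}\cup\bar\ell_{P_j}$ with $C$; your exclusion scheme never acknowledges that $P_j\in C$.

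Two smaller points. First, your $d=1$ discussion claims transversality is automatic once $P_j\notin C$, but the union of six lines fails to meet the line $C$ transversely exactly when two of the six lines hit $C$ at the same point; the paper identifies the obstruction precisely as $P_b$ being the reflection of $P_a$ across $C$, and the greedy choice must avoid reflections, which is what the bound $\#(S\setminus C)\ge 5$ is for. Second, your numerology is off: the $d(d-1)$ dual-curve exclusion and the $\le d/2$ tangencies at $I$ are subtracted once (leaving a good set of size $\ge 2d+1$), not once per round; the per-round exclusions are only the $O(d)$ coincidences with previously created intersection points. These are repairable, but the treatment of the circular points is a missing idea rather than a bookkeeping detail.
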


We note that, while the lower bound $\max(3(d-2), 6)$ in the conclusion of Lemma \ref{lemma:choice-of-points} is natural in the context of our proof of this lemma, it is stronger than required by its application below, namely the proof of Lemma \ref{lemma:sz}.
For this application, a lower bound of $6$ would suffice.

\begin{proof}[Proof of Lemma \ref{lemma:choice-of-points}]
We distinguish the two cases in the statement of the proposition.
First, suppose that $d=1$ and $\#(S\setminus C)\ge 5$.
Given a point $P_a \in \R^2\setminus C$, there exists at most one ``bad'' point $P_b \in \R^2\setminus C$ such that the intersection of $C$ with 
\begin{equation*}
\ell_{P_a}\cup \bar\ell_{P_a} \cup \ell_{P_b} \cup \bar\ell_{P_b}
\end{equation*}
is not transverse, namely $P_b$ is the reflection of $P_a$ with respect to $C$.
For this bad point, the intersection consists of two (rather than four) points. To construct the triple $(P_1, P_2, P_3)$ we pick a point $P_1\in S\setminus C$, then we pick a point $P_2\in S\setminus C$ different from $P_1$ and its reflection and, finally, we pick a point $P_3\in S\setminus C$ different from $P_1$, $P_2$ and their reflections.

Now suppose that $d\ge 3$ and $\#(C\cap S)\ge d(d-1) + \tfrac 5 2 d + 1$, and let $Q = (-i, 1, 0)$ and $\overline Q = (+i,1,0)$.
Let $\mu\ge 0$ be the multiplicity of $C$ at $Q$, which is also the multiplicity at $\overline Q$, since $C$ is defined over $\R$.
The intersection of $C$ with the line joining $Q$ and $\overline Q$ has degree $d$, so $\mu\le d/2$.
This implies that there are at most $d/2$ points $P\in S$ such that $\ell_P$ is tangent to $C$ at $Q$, since each line through $Q$ contains at most one point of $\R^2$.

Let $C^\vee \subseteq (\PP^2_\C)^\vee$ be the dual curve parametrizing lines in $\PP^2_\C$ that are either tangent to $C$ at a smooth point, or pass through a singular point of $C$.
Let $m\subseteq (\PP_\C^2)^\vee$ be the line parametrizing lines through $Q$.
It is a standard fact that $C^\vee$ has degree at most $d(d-1)$, so that the intersection $\overline{C^\vee\setminus m}$ with $m$ consists of at most $d(d-1)$ points.
Thus there are at most $d(d-1)$ points $P\in S$ such that $\ell_P$ is tangent to $C$ at a point different from $Q$.

Let $G$ be the set of points $P\in C\cap S$ such that the line $\ell_P$ is not tangent to $C$ at $Q$ and meets the rest of $C$ transversely at smooth points.
For each $P\in G$, the set $\delta_P = (\ell_P\setminus \{Q, P\})\cap C$ consists of $d-\mu-1$ distinct points. 
We have $d-\mu-1 \ge \max(\tfrac 1 2 (d-2), 1)$, so it suffices to find $P_1, P_2, P_3\in G$ such that the sets $\delta_{P_1}, \overline\delta_{P_1}, \delta_{P_2}, \overline\delta_{P_2}, \delta_{P_3}, \overline\delta_{P_3}$ are pairwise disjoint, where $\overline{\delta}_{P} = (\bar{\ell}_p \setminus  \{\overline{Q},P\}) \cap C$.
By construction $\delta_{P_i} \cap \delta_{P_j} = \emptyset$, $\delta_{P_i} \cap \overline{\delta}_{P_i} = \emptyset$ and $\overline{\delta}_{P_i} \cap \overline{\delta}_{P_j} = \emptyset$ for all distinct $P_1, P_2, P_3\in G$ and all $i \neq j$.
Moreover, by the preceding two paragraphs, the set $G$ has cardinality at least $2d+1$.

This allows us to construct  $(P_1, P_2, P_3)$ as follows: we pick $P_1\in G$, then we pick $P_2\in G$ such that $\bar\ell_{P_2}$ is disjoint from $\delta_{P_1}$ and, finally, we pick $P_3\in G$ such that $\bar\ell_{P_3}$ is disjoint from $(\delta_{P_1}\cup \delta_{P_2})$.
\end{proof}

Some parts that follow summarize some of the work of \cite[3, 4]{sz} -- we give conditions under which points of a rational distance set $S$ contained in a real algebraic curve $C$ can be lifted to rational points of a curve $D$ of genus at least two. We additionally give a lower bound for $\#(C\cap S)$ and upper bound for $g(D)$.
The fact that these bounds can be given in terms of the degree of the curve is important for the proof of Proposition \ref{prop:curves} below.

\begin{lemma}[see also {\cite{sz}}]
\label{lemma:sz}
Let $S\subset \R^2$ be a rational distance set and let $C\subset \PP^2_\C$ be an irreducible projective curve of degree $d\ne 2$ defined over $\R$.
If $d=1$, suppose that $\#(S\setminus C) \ge 5$;
otherwise, suppose that $\#(C\cap S) \ge d(d-1) + \tfrac 5 2 d + 1$ and that $C\cap S$ is contained in the smooth locus of $C$.
Then there exists a smooth projective curve $D$ defined over $\Q$ whose genus satisfies $2 \le g(D) \le d^2 + 1$, and a morphism $\pi : D\to C$  defined over $\R$ such that $(C\cap S) \setminus \pi(D(\Q))$ consists of at most 3 points.
\end{lemma}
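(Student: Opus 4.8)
The plan is to exhibit the points of $C\cap S$ as images of rational points of a suitable double cover of $C$ which, crucially, descends to a curve defined over $\Q$. To set up coordinates, I would first apply Lemma~\ref{lemma:choice-of-points} to obtain three points $P_1,P_2,P_3\in S$ such that the six isotropic lines $\ell_{P_j},\bar\ell_{P_j}$ meet $C$ transversally in at least $6$ points. All the relevant data is preserved by similarities of $\R^2$ (which only permute the two families of isotropic lines) and by rational rescalings (which preserve rationality of distances), so I would apply a similarity to arrange $P_1=(0,0)$, $P_2=(1,0)$, and then invoke Lemma~\ref{lem:lattice} to get $S\subseteq\Lambda:=\{(a,b\sqrt k)\mid a,b\in\Q\}$ for an integer $k>0$; squared distances between points of $S$ are then rational. (If $\#(C\cap S)\le 1$ the statement is vacuous and one takes any genus-$2$ curve over $\Q$ mapping to $C$ over $\R$.) A Bézout argument then shows $C$ is defined over $\Q(\sqrt k)$: the $\Q(\sqrt k)$-space of degree-$d$ forms vanishing on the $\geq d(d-1)+\tfrac52 d+1>d^2$ points of $C\cap S\subseteq\Q(\sqrt k)^2$ is nonzero (its real points contain the equation of $C$), and any such form is a scalar multiple of the equation of $C$, since two irreducible degree-$d$ plane curves meeting in more than $d^2$ points coincide.

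Next I would introduce $\Phi\colon\PP^2\dashrightarrow\PP^3$, $[x:y:z]\mapsto[\,z^2:L_1\bar L_1:L_2\bar L_2:L_3\bar L_3\,]$, so that on $\{z=1\}$ one has $\Phi(P)=[\,1:|PP_1|^2:|PP_2|^2:|PP_3|^2\,]$, and let $\Sigma:=\overline{\Phi(\PP^2)}$. By the law of cosines the equations of $\Sigma$ involve only the rational numbers $|P_iP_j|^2$, so $\Sigma$ is defined over $\Q$, and a blow-up at the two base points $[\pm i:1:0]$ identifies $\Phi$ with an isomorphism from $\Aff^2$ onto $\Sigma\setminus\{s_0=0\}$ when $P_1,P_2,P_3$ are not collinear (the collinear case, in which $\Phi$ identifies a point with its reflection in $\overline{P_1P_2}$, either forces $C$ to be defined over $\Q$ — if $C$ is symmetric in that line — or proceeds identically, and I would dispatch it with a remark). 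The hard part is to show that $C':=\overline{\Phi(C)}$ is defined over $\Q$, even though $C$ and $\Phi$ are only defined over $\Q(\sqrt k)$. Let $\sigma$ generate $\Gal(\Q(\sqrt k)/\Q)$; its action on $\Lambda$-points is exactly reflection $r$ in the $x$-axis (the line $P_1P_2$), since $(a,b\sqrt k)\mapsto(a,-b\sqrt k)$. By the Bézout argument again, $C^\sigma=r(C)$ (both pass through the $>d^2$ points $r(C\cap S)$), while $\Phi^\sigma$ is $\Phi$ with each $P_j$ replaced by $r(P_j)$; as $r$ is an isometry, $\Phi^\sigma(R)=\Phi(r(R))$ for all $R$, so $(C')^\sigma=\overline{\Phi^\sigma(C^\sigma)}=\overline{\Phi^\sigma(r(C))}=\overline{\Phi(C)}=C'$. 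Hence $C'$ descends to $\Q$. Since $\Phi|_C\colon C\to C'$ is generically injective, it induces an isomorphism $\psi\colon(C')^\nu\xrightarrow{\ \sim\ }C^\nu$ of smooth models, defined over $\Q(\sqrt k)\subseteq\R$.

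Finally I would take $D$ to be the smooth projective model of the double cover of $C'$ given by $w^2=s_0s_1s_2s_3$; this is defined over $\Q$, and I set $\pi\colon D\to(C')^\nu\xrightarrow{\psi}C^\nu\xrightarrow{\nu}C$, defined over $\R$. Pulled back along $\psi$, $D$ is the double cover of $C^\nu$ by $w^2=f_1f_2f_3$ with $f_j=L_j\bar L_j/z^2$, whose branch points are the points of odd order in $\operatorname{div}(f_1f_2f_3)=(\text{six lines})\cdot C^\nu-6(\{z=0\})\cdot C^\nu$; there are at most $6d$ of them (odd multiplicities in a degree-$6d$ effective divisor) and at least $6$ (the transverse intersection points from Lemma~\ref{lemma:choice-of-points} give simple zeros lying in $\Aff^2$). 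In particular the cover is connected, and Riemann–Hurwitz gives $g(D)=2g(C^\nu)-1+b/2$ with $6\le b\le 6d$ and $0\le g(C^\nu)\le\binom{d-1}{2}$, whence $2\le g(D)\le 2\binom{d-1}{2}-1+3d=d^2+1$. For $P\in(C\cap S)\setminus\{P_1,P_2,P_3\}$ with $r_j:=|PP_j|\in\Q_{>0}$, the point $\Phi(P)=[\,1:r_1^2:r_2^2:r_3^2\,]\in C'(\Q)$ has the $\Q$-rational lift $[\,1:r_1^2:r_2^2:r_3^2:r_1r_2r_3\,]$ to the double cover, which is a smooth point of it since $P$ is a smooth point of $C$ (so $C'$ is smooth at $\Phi(P)$) and $s_0s_1s_2s_3$ is nonzero there; it therefore comes from a point of $D(\Q)$ whose image under $\pi$ is $P$. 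Hence $(C\cap S)\setminus\pi(D(\Q))\subseteq\{P_1,P_2,P_3\}$, of size at most $3$. The main obstacle is the descent of $C'$ to $\Q$; the Riemann–Hurwitz bookkeeping (where the cardinality hypotheses and the smoothness of $C$ along $C\cap S$ enter) and the collinear sub-case are the remaining, more routine, points.
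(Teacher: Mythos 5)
Your construction of the double cover itself (branch locus the six isotropic lines through $P_1,P_2,P_3$, Riemann--Hurwitz with $6\le r\le 6d$, lifting points of $C\cap S$ via $w=r_1r_2r_3$) is the same as the paper's, but you reach the rationality of the cover by a genuinely different and substantially heavier route. The paper simply applies the linear change of coordinates $(x,y,z)\mapsto(x,y/\sqrt k,z)$ after Lemma \ref{lem:lattice}: this puts all of $S$ at rational coordinates, so $C$ itself becomes defined over $\Q$ by the B\'ezout count, the quadrics $q_{P_j}=(x-a_jz)^2+k(y-b_jz)^2$ have rational coefficients, and the double cover $w^2=q_{P_1}q_{P_2}q_{P_3}$ in $\PP(1,1,1,3)$ is visibly a $\Q$-curve whose $\Q$-points lie over $C\cap S$. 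You instead keep the irrational lattice, push $C$ into $\PP^3$ by the squared-distance map $\Phi$, and descend $C'=\overline{\Phi(C)}$ to $\Q$ by a Galois argument identifying the conjugation over $\Q(\sqrt k)$ with the reflection $r$. That descent is correct as far as it goes (the Cayley--Menger relation does give $\Sigma$ over $\Q$, and $C^\sigma=r(C)$ follows from B\'ezout), and it is closer in spirit to Tao's ``distance coordinates''; what it costs you is the extra step of comparing $C$ with $C'$ via $\psi$, which is where your one real loose end sits.

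Namely, your treatment of the collinear case is not as routine as you claim. If $P_1,P_2,P_3$ are collinear and $C$ is symmetric about their common line, then $\Phi|_C$ is generically $2$-to-$1$, so the induced map on normalizations goes $C^\nu\to (C')^\nu$ and there is no morphism $\psi:(C')^\nu\to C^\nu$; your definition of $\pi$ breaks down. Moreover, observing that $C$ is then defined over $\Q$ does not immediately rescue the direct construction on $C$, because the points of $C\cap S$ still have irrational coordinates $(a,b\sqrt k)$ and so do not give $\Q$-points of a cover of $C$ in the ambient $\PP^2$-coordinates. The clean fix is exactly the paper's coordinate change $y\mapsto y/\sqrt k$, which you could invoke in that sub-case (or globally, which would shorten your whole argument). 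Aside from this, your proof is sound.
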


\begin{proof}
By Lemma \ref{lem:lattice}, we may assume that $S\subseteq \{(a,b \sqrt k)\in \R^2~|~a,b\in \Q\}$ for some $k \in \mathbb{Z}_{>0}$.
Let $\phi : \PP_\C^2\to \PP_\C^2$ be the linear automorphism sending $(x,y,z)\mapsto(x,y/\sqrt k,z)$. Let $C' := \phi(C)$ and $S' := \phi(S)$. Then $C'$ is defined over $\Q$, as it contains $\geq$ $d^2+1$ points with rational coordinates in $\PP_\C^2$, namely the points of $S'\cap C'$.
Let $f\in \Q[x,y,z]$ be a homogeneous equation for $C'$ of degree $d$.

Given $P=(a,b)\in S'$, let $q_P$ denote the quadratic polynomial $(x-az)^2 + k (y-bz)^2\in \Q[x,y,z]$.
The zero locus of $q_P$ in $\PP_\C^2$ is a pair of complex lines that meet at $P$.
By Lemma \ref{lemma:choice-of-points}, we can find points $P_1, P_2, P_3\in S'$ such that the union of six lines $\{q_{P_1} q_{P_2} q_{P_3}=0\} \subseteq \PP_\C^2$ meets $C'$ transversely at $\geq$ 6 points.
Let $x,y,z,w$ be the coordinates of the weighted projective space $\PP(1,1,1,3)$, and
let $D'$ be the curve in $\PP(1,1,1,3)$ defined by $w^2 - q_{P_1}q_{P_2}q_{P_3} = f = 0$.
Let $\pi' : D'\to C'$ be the projection onto the first three coordinates. Then $\pi'(D'(\Q))$ contains $C'\cap S'$.

Let $\pi'' : D \to C'$ be the composition of the normalization $D\to D'$ with $\pi'$, and let $\pi := \phi^{-1}\circ \pi''$.
We note that $\pi''$ is a double cover whose ramification divisor is reduced and consists of the points of odd multiplicity in the intersection $\{q_{P_1} q_{P_2} q_{P_3}=0\}\cap C'$.
This ramification divisor is nonempty (in fact, contains at least six points), so $D$ is connected.
Applying Riemann-Hurwitz, we conclude that 
$g(D) = 2g(C) -1 + \frac{r}{2},$ where $6 \leq r \leq 6d$. As $C$ is a plane curve, we have that $0 \leq g(C) \leq \frac{1}{2}(d-1)(d-2)$, and therefore $2 \leq g(D) \leq d^2+1$.

The set $T = (C'\cap S')\setminus \{P_1, P_2, P_3\}$ is contained in the smooth locus of $C'$ by assumption and disjoint from the ramification locus of $\pi'$. Therefore the inverse image of every point in $T$ under $\pi'$ consists of two smooth points of $D'$ defined over $\Q$.

Thus $(C'\cap S')\setminus \pi''(D(\Q))$ is contained in $\{P_1, P_2, P_3\}$, and therefore $(C\cap S)\setminus \pi(D(\Q))$ consists of at most three points.
\end{proof}

In the following proposition, we show that Lang's Conjecture implies a stronger version of the results of \cite{sz} mentioned in the introduction.
We say that an algebraic curve $C\subset \PP_\C^2$ is a \emph{circle} if it has degree two and the intersection $C\cap \R^2$ is a circle in the usual sense.

\begin{proposition}
\label{prop:curves}
Assume Lang's Conjecture.
Let $S\subset \R^2$ be a rational distance set and let $C \subset \PP^2_\C$ be an irreducible projective curve of degree $d$. Assume
\begin{enumerate}
\item $C$ is neither a line nor a circle; or
\item $C$ is a line and $S$ contains at least five points not in $C$; or
\item $C$ is a circle and $S$ contains at least four points not in $C$;
\end{enumerate}
then the cardinality of $C\cap S$ is bounded by a constant that depends only on $d$.
\end{proposition}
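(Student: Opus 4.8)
The plan is to deduce the bound from the Caporaso--Harris--Mazur theorem (Theorem \ref{thm:chm}): I aim to produce a smooth projective curve $D$ over $\Q$ whose genus $g$ satisfies $2\le g\le d^2+1$, together with a morphism $\pi\colon D\to C$, in such a way that $C\cap S$ is contained in $\pi(D(\Q))$ together with a set whose cardinality is bounded in terms of $d$. Granting this, $\#(C\cap S)$ is at most $\#D(\Q)$ plus a quantity bounded in terms of $d$, hence at most $\max_{2\le g\le d^2+1}B(\Q,g)$ plus a quantity bounded in terms of $d$, which is a constant depending only on $d$. Note that I may freely assume that $\#(C\cap S)$ exceeds any prescribed quantity depending only on $d$, since otherwise there is nothing to prove; in particular I may assume $\#(C\cap S)\ge 5$, which guarantees (as in the proof of Lemma \ref{lemma:sz}) that the rescaled curve introduced below is defined over $\Q$.

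When $d\ne 2$ the curve $D$ is supplied directly by Lemma \ref{lemma:sz}. If $C$ is a line (case (2)), the hypothesis $\#(S\setminus C)\ge 5$ is exactly the hypothesis of that lemma, and the resulting $D$ has genus $2$; a line is smooth, so no further adjustment is needed. If $d\ge 3$ (the remaining part of case (1)), I first remove from $S$ the at most $\binom{d-1}{2}$ points lying on the finite singular locus of $C$: if the resulting rational distance set still meets $C$ in at least $d(d-1)+\tfrac52 d+1$ points, Lemma \ref{lemma:sz} applies to it and yields $D$ with $2\le g(D)\le d^2+1$ and $(C\cap S)\setminus\pi(D(\Q))$ of size at most $3+\binom{d-1}{2}$; otherwise $\#(C\cap S)$ was already bounded in terms of $d$.

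It remains to treat $d=2$, that is, $C$ an irreducible conic which is either a circle with at least four points of $S$ off it (case (3)) or not a circle (the rest of case (1)); here Lemma \ref{lemma:sz} does not apply, but the construction in its proof does. After the normalization of Lemma \ref{lem:lattice} and the rescaling $\phi\colon(x,y,z)\mapsto(x,y/\sqrt k,z)$, one gets a conic $C'=\phi(C)$ defined over $\Q$, with homogeneous equation $f$, and for each $P=(a,b)\in S'=\phi(S)$ the quadric $q_P=(x-az)^2+k(y-bz)^2$, whose zero locus is the pair of lines joining $P$ to the conjugate points $(i\sqrt k,1,0)$ and $(-i\sqrt k,1,0)$; crucially $\phi$ maps the pair of circular points $\{(1,i,0),(1,-i,0)\}$ onto $\{(i\sqrt k,1,0),(-i\sqrt k,1,0)\}$, so $C$ is a circle precisely when $C'$ passes through both of these points, and otherwise $C'$ passes through neither. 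I then choose three points $P_1,P_2,P_3\in S'$ --- from $S'\setminus C'$ in the circle case (feasible since $\#(S'\setminus C')\ge 4$), and from the large set $C'\cap S'$ otherwise --- set $D'=\{w^2-q_{P_1}q_{P_2}q_{P_3}=f=0\}\subset\PP(1,1,1,3)$ with its projection onto $C'$, and let $D\to D'$ be the normalization and $\pi''\colon D\to C'$ the induced double cover. As in Lemma \ref{lemma:sz}, every point of $C'\cap S'$ other than $P_1,P_2,P_3$ lifts to a rational point of $D$, since $q_{P_j}$ evaluated at a point of the rescaled rational distance set is the square of a rational number (and, being a positive distance squared, is nonzero, so such points avoid the ramification). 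Since $C'\cong\PP^1$, the cover $\pi''$ is branched over the points of odd multiplicity in the degree-$12$ divisor $\{q_{P_1}q_{P_2}q_{P_3}=0\}\cap C'$; for a suitable choice of the $P_j$ this divisor has $r$ distinct such points with $6\le r\le 12$ --- in the circle case one uses that each $q_{P_j}$ meets $C'$ at both of the points $(\pm i\sqrt k,1,0)$, so that these occur with odd multiplicity $3$ --- and then the branch locus is nonempty, so $D$ is connected and Riemann--Hurwitz gives $g(D)=(r-2)/2$ with $2\le g(D)\le 5=d^2+1$; one concludes as before.

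The step I expect to be the main obstacle is the combinatorial selection of $P_1,P_2,P_3$ guaranteeing that $\{q_{P_1}q_{P_2}q_{P_3}=0\}\cap C'$ carries at least six points of odd multiplicity. For $d\ne 2$ this is precisely Lemma \ref{lemma:choice-of-points}. For $d=2$ one needs its (easier) analogue: using that the dual conic has degree $2$, and that two lines of a single isotropic family meet only at their common point at infinity --- which lies off $C'$ unless $C'$ is a circle --- one finds that the triples to be avoided lie on a bounded number of explicit lines and finitely many explicit incidence loci. The tightest instance is a circle with exactly four points of $S$ off $C$, where only one point is to spare; there one checks directly that the forbidden values of $P_j$ in $S'\setminus C'$ --- namely those for which some line through $P_j$ is tangent to $C'$ at one of the points $(\pm i\sqrt k,1,0)$, or for which two of the residual intersection points coincide --- are few enough that a valid triple remains.
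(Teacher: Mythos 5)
Your treatment of $d\neq 2$ coincides with the paper's (reduce to the smooth locus, then apply Lemma \ref{lemma:sz} and Theorem \ref{thm:chm}), but for $d=2$ you diverge: the paper does not redo the double-cover construction on the conic at all. Instead it inverts the plane in the unit circle centered at a point $P\in C\cap S$; the image of $S$ (together with $P$) is again a rational distance set, and $C$ becomes a line with at least \emph{five} points of the new set off it (the four given points plus $P$ itself) if $C$ is a circle, or a cubic otherwise, so the $d\ne 2$ cases already proved apply. This reduction is precisely what makes the hypothesis ``four points off a circle'' suffice, and it is where your direct approach has a genuine gap.

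Concretely, in your circle case you must pick $P_1,P_2,P_3\in S'\setminus C'$ so that none lies on the tangent line to $C'$ at $(\pm i\sqrt k,1,0)$ (i.e.\ none is the center) and no two are inversive images of each other in $C'$; otherwise the residual points pair up, the branch divisor drops to four odd-multiplicity points, and $D$ has genus $1$, where neither Faltings nor Theorem \ref{thm:chm} applies. With only four available points this selection can fail outright: if the four points form two inversion pairs (or one is the center and two of the remaining three are an inversion pair), every triple contains a forbidden pair. Your claim that ``one checks directly that \dots a valid triple remains'' is therefore false as stated, unless you additionally prove that such configurations are incompatible with $\#(C\cap S)$ being large --- which you do not. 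The gap is repairable without inversion (e.g.\ take one $P_j$ on $C'$, whose two lines pass through $Q$ and $\bar Q$ and contribute only even multiplicities away from $Q,\bar Q$, and only two points off $C'$, for which a non-inversive, non-central pair always exists among the four), but some such fix, or the paper's inversion argument, is needed. The non-circle conic case of your argument is fine, since there the $P_j$ are drawn from the arbitrarily large set $C'\cap S'$ and the greedy selection goes through.
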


\begin{proof}
Suppose first that $d\ne 2$.
To prove the result in this case, we may assume that $C\cap S$ is contained in the smooth locus of $C$, since the cardinality of the singular locus is bounded by the arithmetic genus $\tfrac 1 2 (d-1)(d-2)$.
We further may assume that $\#(C\cap S)\ge d(d-1) + \tfrac 5 2 d + 1$; otherwise there is nothing to prove.
Since $C$ and $S$ satisfy the hypotheses of Lemma \ref{lemma:sz}, the result follows by the theorem of Caporaso-Harris-Mazur (Theorem \ref{thm:chm}) applied to the curve $D$ of Lemma \ref{lemma:sz}.

Now suppose that $d=2$. Let $P\in C\cap S$ be a point and let $\phi : \R^2\setminus \{P\}\to \R^2$ be the inversion with respect to the unit circle centered at $P$.
Let $S' := \phi(S\setminus \{P\})\cup \{ P \}$; then $S'$ is a rational distance set.
Let $\Phi : \PP^2_\C\DashedArrow[->,densely dashed] \PP_\C^2$ be the unique birational involution that agrees with $\phi$ on the Zariski-dense subset $\R^2\setminus \{P\} \subseteq \PP^2_\C$. For example, if $P=(0,0)$, then $\Phi$ is given by $\Phi(x,y,z) = (xz,yz,x^2+y^2)$.
Let $C' \subseteq \PP_\C^2$ be the image of $C$ under $\Phi$.
If $C$ is a circle, then $C'$ is a line and $\#(S'\setminus C)\ge 5$; otherwise, $C'$ is a cubic curve.
By the preceding paragraph applied to $(C',S')$, the cardinality of $C'\cap S'$ is bounded by a constant independent of $C'$ and $S'$.
\end{proof}

\begin{remark}\label{rmk:generalposition}
If $S \subseteq \R^2$ is a rational distance set in general position, then the hypotheses of Proposition \ref{prop:curves} are satisfied for every curve $C\subseteq \PP_\C^2$.
\end{remark}

The following result strengthens the theorem of Shaffaf \cite{shaffaf} and Tao \cite{tao} that rational distance sets are contained in real algebraic curves.

\begin{proposition}
\label{prop:bounded-degree}
Assume Lang's Conjecture. Given any rational distance set, there exists a Zariski-closed proper subset $Z$ of $\PP^2_\C$ containing it.
Moreover, there exists a positive integer $d$ such that, for each rational distance set, the subset $Z$ may be chosen with the sum of the degrees of its irreducible components no greater than $d$.
\end{proposition}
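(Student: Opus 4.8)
The plan is to combine Tao's result (which says that, assuming Lang's Conjecture, any rational distance set is contained in an algebraic curve — this is already implicit in the discussion, and follows from Theorem \ref{thm:hassett} applied to a suitable surface, the very surface whose general type is established in Proposition \ref{prop:surf-gen-type} below) with the degree bounds we have just extracted. So the first step is to recall that a rational distance set $S$ lifts to a set of rational points on a surface $X$ of general type in a flat family $X \to B$; by Theorem \ref{thm:hassett}, the sum $N$ of the degrees of the components of $\overline{X(\Q)}$ is bounded by a constant independent of $S$, and pushing forward this closed set to $\PP^2_\C$ gives a proper Zariski-closed $Z \supseteq S$ whose components have total degree $\le d_0$ for some absolute $d_0$. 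This already proves existence of $Z$; the content of the ``moreover'' is that $d_0$ can be taken uniform, which is exactly what Theorem \ref{thm:hassett} gives.

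An alternative and more self-contained route, which I would prefer to present, is to bootstrap from Proposition \ref{prop:curves} directly. Let $S$ be a rational distance set. If $S$ is finite we are done, so assume $S$ is infinite. By Proposition \ref{prop:curves} together with Remark \ref{rmk:generalposition}-type reasoning, there cannot be a curve $C$ of every degree meeting $S$ in infinitely many points with $C$ ``in general position'' relative to $S$ — so the only way $S$ can be infinite is that all but finitely many of its points lie on a single line or a single circle. Indeed, by Solymosi--de Zeeuw \cite{sz}, an infinite rational distance set contained in an irreducible curve forces that curve to be a line or a circle. Thus: either $S$ is finite (take $Z$ any curve through $S$, of degree $\le |S|$), or all but at most four points of $S$ lie on a line, or all but at most three points of $S$ lie on a circle. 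In the last two cases, take $Z$ to be the union of that line (or circle) with the $\le 4$ lines through the remaining exceptional points and a fixed base point; this has total degree $\le 1 + 4 = 5$ (resp. $\le 2 + 3 = 5$).

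So the remaining case is $S$ finite, where the naive bound $\deg Z \le |S|$ is \emph{not} uniform. Here is where I would invoke Theorem \ref{thm:mainthm} — but that creates a logical worry, since Theorem \ref{thm:mainthm} is the paper's main goal and I must check this proposition is not used in its proof. Since the introduction says the proof of Theorem \ref{thm:mainthm} goes through Caporaso--Harris--Mazur and Hassett (i.e. through Proposition \ref{prop:curves} and Proposition \ref{prop:surf-gen-type}, not through \ref{prop:bounded-degree}), it is legitimate to cite it here. Given Theorem \ref{thm:mainthm}, a rational distance set in general position has cardinality $\le B$ for an absolute $B$; and a general (finite) rational distance set, after discarding the $\le 4$ points witnessing failure of general position on a line or $\le 3$ on a circle, reduces to the line/circle case above. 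Putting the three cases together, one may always choose $Z$ of total degree at most $\max(B, 5)$, an absolute constant $d$.

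The main obstacle is this last bookkeeping: one must make sure the reduction ``throw away the points on the exceptional line/circle'' is applied correctly and that the residual set is genuinely either small (general position, hence bounded by Theorem \ref{thm:mainthm}) or again confined to a line/circle, so that the recursion terminates after one step. I would structure the argument as a clean three-way case split — (i) $S$ in general position, bounded by Theorem \ref{thm:mainthm}; (ii) all but $\le 4$ points of $S$ on a line; (iii) all but $\le 3$ points of $S$ on a circle — and in each case exhibit $Z$ of degree $\le \max(B,5)$ explicitly, with the line/circle handled via Solymosi--de Zeeuw \cite{sz} exactly as recalled in the introduction.
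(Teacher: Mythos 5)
Your first paragraph is, in outline, exactly the paper's proof: normalize $S$ by Lemma \ref{lem:lattice}, pick four points of $S$ and form the complete intersection $V\subset\PP^6_\C$ of the four quadrics $r_j^2=(x-a_jz)^2+k(y-b_jz)^2$, note that $S$ lifts to $V(\Q)$ under the projection $\pi$, apply Proposition \ref{prop:surf-gen-type} and Lang's Conjecture to see that $\overline{V(\Q)}\subsetneq V$, apply Theorem \ref{thm:hassett} to the family of all complete intersections of four quadrics in $\PP^6_\C$ to bound the total degree of $\overline{V(\Q)}$ by a constant independent of $S$, and push forward along the finite map $\pi$. Had you fleshed this out --- in particular specifying the surface, why rationality of the distances makes the lifted points rational (this uses the $\sqrt{k}$ normalization), why one may assume $\#S\ge 4$, and why finiteness of $\pi$ preserves both properness and the degree bound --- you would have the intended argument.

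The route you say you would prefer to present instead has a fatal circularity: Theorem \ref{thm:mainthm} is proved in the paper precisely by invoking Proposition \ref{prop:bounded-degree} (its proof begins ``By Proposition \ref{prop:bounded-degree}, there exists an integer $d$, independent of $S$ \dots''), so you cannot use Theorem \ref{thm:mainthm} to handle the finite case here; your reading of the introduction as licensing this is incorrect. There is a second gap in that route: for infinite $S$ you claim that the only possibility is that all but finitely many points lie on a single line or circle, citing Solymosi--de Zeeuw, but their theorem applies only to rational distance sets already known to be contained in a real algebraic curve. Ruling out that an infinite $S$ is Zariski dense in $\R^2$ is exactly the existence half of the proposition you are trying to prove (the Shaffaf--Tao step), and it comes from the surface construction of your first paragraph, not from Proposition \ref{prop:curves}, which only bounds $\#(C\cap S)$ for a given curve $C$. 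So the ``self-contained'' alternative both assumes part of the conclusion and assumes the main theorem; only the Hassett-based argument sketched in your first paragraph is viable.
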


\begin{proof}
Let $S\subset \R^2$ be a rational distance set.
By Lemma \ref{lem:lattice}, we may assume that $S\subseteq \{(a,b \sqrt k)\in \R^2~|~a,b\in \Q\}$ for some positive integer $k$, and we may also assume that $\# S \geq 4$.
Let $\phi : \R^2\to \R^2$ be the linear automorphism that sends $(x,y)\mapsto(x,y/\sqrt k)$. Let $S' := \phi(S)$. 
Let $(a_1,b_1),\dotsc,(a_4,b_4) \in S'$ be four distinct points, let $x,y,z,r_1,r_2,r_3,r_4$ be the coordinates of $\PP^6_\C$, and let $V\subseteq \PP^6$ be the intersection of the four quadrics with equations $r_j^2 = (x-a_j z)^2 + k (y-b_jz)^2$, where $j=1,2,3,4$. 

If $\pi : V\to \PP^2_\C$ is the projection onto the first three coordinates, then $\pi(V(\Q))$ contains $S'$.
Indeed, if $(u,v)\in S'$ then, for each $j=1,2,3,4$, the two points $(u,v\sqrt k)$ and $(a_j,b_j\sqrt k)$ are both in $S$, so the distance $s_j := \sqrt{(u-a_j)^2 + k(v-b_j)^2}$ is a rational number. Therefore, we conclude that $(u,v,1,s_1,s_2,s_3,s_4)\in V(\Q)$.

We will see in Proposition \ref{prop:surf-gen-type} below that the surface $V$ is of general type, and so by Lang's Conjecture the Zariski-closure $\overline {V(\Q)}$ of the set of rational points is a proper subset of $V$.
By Hassett's result (Theorem \ref{thm:hassett}) applied to the family of all complete intersections of four quadrics in $\PP^6_\C$, there exists a constant $d$, independent of $S$ and the choice of the four points $(a_i,b_i)$, such that the sum of the degrees of the irreducible components of $\overline {V(\Q)}$ is at most $d$.

The morphism $\pi : V\to \PP^2_\C$ is finite, so $\pi(\overline {V(\Q)})$ is a Zariski-closed proper subset $Z \subset \PP^2_\C$, the sum of the degrees of the irreducible components of $Z$ is at most $d$, and $Z$ contains $S'$.
\end{proof}

We now combine Propositions \ref{prop:curves} and  \ref{prop:bounded-degree} to establish the main result of this paper.

\begin{proof}[Proof of Theorem \ref{thm:mainthm}]
Let $S$ be a rational distance set in general position.
By Proposition \ref{prop:bounded-degree}, there exists an integer $d$, independent of $S$, and a Zariski-closed proper subset $Z\subset \PP^2_\C$ that contains $S$ and such that the sum of the degrees of the irreducible components of $Z$ is at most $d$.
Thus $Z$ has at most $d$ irreducible components, each of which has degree at most $d$.
The components of dimension zero each contain at most one point of $S$.
By Proposition \ref{prop:curves} since the set $S$ is in general position (see Remark \ref{rmk:generalposition}), there exists a constant depending only on $d$, and hence independent of $S$, that bounds the number of points of $S$ contained in each of the one-dimensional irreducible components of $Z$.
\end{proof}

\section{A criterion for general type}
\label{sec:gen_type}

The main result of this section is Proposition \ref{prop:criterion-general-type}, which gives a criterion for varieties with non-canonical singularities to be of general type.
We begin with a definition.

\begin{definition}
Let $X$ be a normal variety whose canonical divisor $K_X$ is $\Q$-Cartier. Let $f: \widetilde X\to X$ be a resolution of singularities with irreducible exceptional divisors $E_1,\dotsc,E_r$, and let $a_1,\dotsc,a_r\in \Q$ be the corresponding discrepancies (see \cite[Definition 2.22]{kollar-mori}), so that $K_{\widetilde X} = f^* K_X + \sum_j a_j E_j$ in $\Pic(\widetilde X)\otimes \Q$.
We say that $X$ has \emph{canonical singularities} if $a_j \ge 0$ for all $j=1,\dotsc,r$.
\end{definition}

We note that the above definition is independent of the choice of resolution.
One can check whether a variety with canonical singularities is of general type without passing to a resolution.
This observation is formalized by the following lemma.

\begin{lemma}
\label{lemma:can-gen-type}
Let $X$ be a normal projective variety over $\C$.
Suppose that the canonical divisor $K_X$ is $\Q$-Cartier.
If $X$ is of general type, then the canonical divisor $K_X$ is big.
The converse holds if $X$ has canonical singularities.
\end{lemma}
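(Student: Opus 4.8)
The plan is to unwind Definition~\ref{def:generaltype} using a resolution adapted to a minimal model perspective. Fix a resolution $f : \widetilde X \to X$ with exceptional divisors $E_1,\dots,E_r$ and discrepancies $a_1,\dots,a_r$, so that $K_{\widetilde X} = f^* K_X + \sum_j a_j E_j$ in $\Pic(\widetilde X)\otimes\Q$. By definition, $X$ is of general type exactly when $K_{\widetilde X}$ is big, and since bigness is a birational invariant of smooth projective varieties (it depends only on the function field), this does not depend on the chosen resolution.

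For the first implication, suppose $X$ is of general type, so $K_{\widetilde X}$ is big. I would push forward: $f_*\calO_{\widetilde X}(m K_{\widetilde X})$ and the reflexive sheaf $\calO_X(mK_X)$ agree on the smooth locus of $X$ away from the images of the $E_j$, hence agree up to a subsheaf supported in codimension $\ge 2$ and (after rounding) there is an inclusion $f_*\calO_{\widetilde X}(\lfloor m K_{\widetilde X}\rfloor) \hookrightarrow \calO_X(mK_X)$ for divisible $m$ — the point being that the $a_j E_j$ can only decrease sections when pushed down, because $a_j$ may be negative, but $\lfloor \sum a_j E_j\rfloor$ is $f$-exceptional so $f_*\calO_{\widetilde X}(f^* mK_X + \lfloor \sum m a_j E_j\rfloor) \subseteq f_*\calO_{\widetilde X}(f^* mK_X) = \calO_X(mK_X)$ by the projection formula and normality of $X$. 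Actually the cleaner route: $H^0(\widetilde X, mK_{\widetilde X}) = H^0(\widetilde X, f^* mK_X + \sum m a_j E_j) \hookrightarrow H^0(\widetilde X, f^*mK_X) $ whenever all the $\lfloor m a_j\rfloor$ for the terms we must discard are $\le 0$ — but that fails if some $a_j>0$. So instead I record only the inclusion onto the part where $X$ is $\Q$-Gorenstein: one has $H^0(X, mK_X) = H^0(\widetilde X, f^*mK_X)$ by normality, and $H^0(\widetilde X, mK_{\widetilde X})$ maps to $H^0$ of the generic point of $X$ landing inside $H^0(X,mK_X)$ since rational canonical forms on $\widetilde X$ and on $X$ coincide; bigness of $K_{\widetilde X}$ then forces $h^0(X,mK_X)$ to grow like $m^{\dim X}$, i.e.\ $K_X$ is big.

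For the converse, assume $X$ has canonical singularities, so all $a_j \ge 0$, and assume $K_X$ is big. Then $m K_{\widetilde X} = f^*(mK_X) + \sum_j m a_j E_j \ge f^*(mK_X)$ as an effective-coefficient perturbation, so $H^0(\widetilde X, mK_{\widetilde X}) \supseteq H^0(\widetilde X, f^* mK_X) = H^0(X, mK_X)$, the last equality again by normality ($f_*\calO_{\widetilde X} = \calO_X$ and the projection formula). Hence $h^0(\widetilde X, mK_{\widetilde X}) \ge h^0(X, mK_X)$, which grows like $m^{\dim X}$ by bigness of $K_X$, so $K_{\widetilde X}$ is big and $X$ is of general type. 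The main obstacle — really the only subtle point — is being careful that ``big'' for the $\Q$-Cartier divisor $K_X$ on the possibly-singular $X$ is interpreted via growth of $h^0(X, \calO_X(mK_X))$ with $\calO_X(mK_X)$ the reflexive hull $(\omega_X^{\otimes m})^{**}$, and matching this with sections on $\widetilde X$ through the identity $f_*\calO_{\widetilde X}(f^*mK_X) = \calO_X(mK_X)$, valid because $X$ is normal; once this bookkeeping is set up, both directions are immediate from $K_{\widetilde X} - f^*K_X = \sum a_j E_j$ together with the sign of the $a_j$.
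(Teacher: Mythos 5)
Your argument is correct and is essentially the paper's proof: the forward direction is the injection $H^0(\widetilde X,\omega_{\widetilde X}^{\otimes l})\hookrightarrow H^0(X,\omega_X^{[l]})$ obtained by restricting pluricanonical forms to the big open set where $f$ is an isomorphism and extending by reflexivity/normality, and the converse uses that non-negative discrepancies make pullback of sections well defined (so the inclusion becomes an equality). The only difference is cosmetic --- after your self-corrected false start you phrase the comparison divisorially, while the paper packages it as the sheaf map $\tau_l:\rho_*\omega_{\widetilde X}^{\otimes l}\hookrightarrow\omega_X^{[l]}:=(\omega_X^{\otimes l})^{\vee\vee}$ --- and your closing remark about interpreting bigness of $K_X$ via the reflexive powers $\omega_X^{[m]}$ is exactly the bookkeeping the paper relies on.
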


\begin{proof}
Let $\omega_X := \OO_X(K_X)$ be the canonical sheaf of $X$.
Thus a section of $\omega_X$ is a section of the top exterior power of the cotangent bundle of the smooth locus of $X$.
Let $\rho : \widetilde X\to X$ be a resolution of singularities.
For each integer $l > 0$, there is a natural inclusion $\tau_l : \rho_* \omega_{\widetilde X}^{\otimes l}\hookrightarrow \omega_X^{[l]}:=(\omega_X^{\otimes l})^{\vee\vee}$ given by restriction of pluricanonical forms to the complement of the exceptional divisor of $\rho$.

Furthermore, if $X$ has canonical singularities, then $\tau_l$ is an isomorphism for each $l$ such that $\omega_X^{[l]}$ is locally free.
Indeed, by definition of canonical singularities, pullbacks of sections of $\omega_X^{[l]}$ do not have poles when regarded as rational sections of $\omega_{\tilde X}^{\otimes l}$; pullback of sections defines the inverse of $\tau_l$.
\end{proof}

Proposition \ref{prop:criterion-general-type} partially generalizes the converse in the above lemma (``partially'' because in this proposition we assume $K_X$ is ample).
Before stating it, we recall the definitions of Hilbert-Samuel multiplicity and ordinary multiple point.

\begin{definition}
Let $X$ be an algebraic scheme over $\C$.
Let $x\in X$ be a closed point.
\begin{enumerate}

\item There exists a unique polynomial $H\in \Q[l]$ such that $H(p) = \dim_{\C} \OO_{X,x}/\mathfrak m_x^p$ for all sufficiently large $p$.
The degree of $H$ is equal to the Krull dimension $d$ of $\OO_{X,x}$.
The \emph{Hilbert-Samuel multiplicity} of the local ring $\OO_{X,x}$ is equal to $d!$ times the leading coefficient of $H$.
It is a positive integer that we denote by $e(\OO_{X,x})$.

\item Let $\pi : B\to X$ be the blowup of $X$ at the ideal sheaf of $x$. Let $E\subseteq B$ be the exceptional divisor of $\pi$.
If $E$ is smooth, we say that $x\in X$ in an \emph{ordinary multiple point}.

\end{enumerate}
\end{definition}

In terms of the canonical isomorphism $E = \operatorname{Proj}(\oplus_{l\ge 0}\mathfrak m^l/\mathfrak m^{l+1})$, we have $\OO_E(1)=\OO_B(-E)|_E$.
By asymptotic Riemann-Roch, the Hilbert-Samuel multiplicity $e(\OO_{X,x})$ is equal to the degree of $E$ with respect to this ample invertible sheaf. If $X$ has an ordinary multiple point at $x$, then the blowup $B$ is smooth in a neighborhood of $E$, and so ordinary multiple points are isolated singularities.

The following criterion extends Lemma \ref{lemma:can-gen-type} when the set of non-canonical singularities of the variety consists of ordinary multiple points. 

\begin{proposition}
\label{prop:criterion-general-type}
Let $X$ be a normal projective variety of dimension $d$ over $\C$ whose canonical disivor $K_X$ is $\Q$-Cartier.
Suppose that the set $\Sigma$ of non-canonical singularities of $X$ consists of ordinary multiple points.
For each $x\in \Sigma$, let $E_x$ denote the exceptional divisor of the blowup of $X$ at $x$, and let $a(E_x,X)$ denote the discrepancy of $E_x$ with respect to $X$.
If $K_X$ is ample and 
\begin{equation}
\label{K-positive}
K_X^d > \sum_{x\in \Sigma} |a(E_x,X)|^d \cdot  e(\OO_{X,x}),
\end{equation}
then $X$ is of general type.
\end{proposition}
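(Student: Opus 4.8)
The plan is to resolve the non-canonical singularities by a single blowup at the points of $\Sigma$, compute how the canonical divisor transforms, and then show that the pulled-back $K_X$ already has enough positivity to dominate the (negative) contribution of the exceptional discrepancies when computing top self-intersection numbers. Concretely, let $\pi\colon B\to X$ be the blowup of $X$ at the (finite, reduced) subscheme $\Sigma$, with exceptional divisor $E=\sum_{x\in\Sigma}E_x$. Since each $x\in\Sigma$ is an ordinary multiple point, $B$ is smooth in a neighbourhood of $E$; away from $\Sigma$ the variety $X$ has only canonical singularities, so after composing with a further resolution $\rho\colon\widetilde X\to B$ that is an isomorphism over a neighbourhood of $E$, we obtain a resolution $f\colon\widetilde X\to X$ for which, by the canonicity of the remaining singularities and Lemma \ref{lemma:can-gen-type} (applied locally), it suffices to show that $K_B:=\pi^*K_X+\sum_x a(E_x,X)E_x$ is big on $B$, equivalently that $\omega_B$ has maximal Iitaka dimension. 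Here I am using that discrepancies of the canonical singularities lying outside $\Sigma$ are $\ge 0$, so they only help.

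The key computation is the top self-intersection $K_B^d=(\pi^*K_X+\sum_x a(E_x,X)E_x)^d$. Expanding by multilinearity, the cross terms $(\pi^*K_X)^j\cdot(\text{monomial in the }E_x)$ with $0<\sum(\text{exponents of }E_x)<d$ vanish: $\pi^*K_X\cdot E_x=\pi^*(K_X)\cdot E_x$ pushes forward to a cycle supported on the point $x$, and more precisely $\pi^*K_X$ restricted to $E_x\cong\PP^{d-1}$ is trivial since $\pi(E_x)$ is a point. Hence
\begin{equation}
\label{eq:expand}
K_B^d = K_X^d + \sum_{x\in\Sigma} a(E_x,X)^d\, E_x^d .
\end{equation}
Now $E_x^d=(-1)^{d-1}\deg_{\OO_{E_x}(1)}E_x=(-1)^{d-1}e(\OO_{X,x})$ by the asymptotic Riemann–Roch discussion preceding the statement, where $\OO_{E_x}(1)=\OO_B(-E_x)|_{E_x}$. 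Also the discrepancy $a(E_x,X)$ of an ordinary multiple point is $\le -1<0$ (one sees this by an explicit local computation, or simply invokes that $x\in\Sigma$ is non-canonical so $a(E_x,X)<0$, and the blowup computes it as an integer $\le -1$). Therefore $a(E_x,X)^d E_x^d=(-1)^{d-1}a(E_x,X)^d e(\OO_{X,x})$ has sign $(-1)^{d-1}\cdot(-1)^d=-1$, so it equals $-|a(E_x,X)|^d e(\OO_{X,x})$. Substituting into \eqref{eq:expand} gives
\begin{equation}
K_B^d = K_X^d - \sum_{x\in\Sigma}|a(E_x,X)|^d\, e(\OO_{X,x}),
\end{equation}
which is \emph{strictly positive} precisely by hypothesis \eqref{K-positive}.

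Finally I would upgrade ``$K_B^d>0$'' to ``$K_B$ big''. The divisor $\pi^*K_X$ is nef and big (pullback of the ample $K_X$ under a birational morphism), and $K_B=\pi^*K_X+\sum_x a(E_x,X)E_x$ differs from it by a divisor supported on the $\pi$-exceptional locus. For such a divisor one can argue directly: write $K_B=\pi^*K_X-N$ with $N=\sum_x|a(E_x,X)|E_x$ effective and $\pi$-exceptional; then $h^0(B,mK_B)=h^0(B,\pi^*(mK_X)-mN)\ge h^0(X,mK_X\otimes\pi_*\OO_B(-mN))$, and since $N$ is $\pi$-exceptional and $X$ is normal, a local analysis at each $x$ shows that sections of $\OO_X(mK_X)$ vanishing to high enough order at the finitely many points of $\Sigma$ still grow like $m^d$ — the order of vanishing needed is linear in $m$, costing only $O(m^{d-1})$ in dimension, while $h^0(X,mK_X)$ grows like $\tfrac{K_X^d}{d!}m^d$; the positivity \eqref{K-positive} is exactly what guarantees the leading term survives. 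Alternatively, and more cleanly, one invokes that for a nef and big divisor $P$ and an effective divisor $N$, the $\R$-divisor $P-tN$ is big for $t$ in a range controlled by the intersection numbers, and computes that $t=1$ lies in this range because $(P-N)^d=K_B^d>0$ together with nefness of $P$ forces $K_B$ to lie in the big cone (its top self-intersection is positive and it is a limit of... ) — here I would instead simply cite the characterization that a divisor $D$ on a projective variety with $D\equiv P-N$, $P$ nef, is big as soon as $D^d>0$ when $D$ is also known to be, e.g., $\pi$-relatively bounded; the safest route is the direct Riemann–Roch estimate above. I expect this last step — passing from positivity of the intersection number to actual bigness, i.e. controlling $h^0$ of the resolution in terms of $K_X^d$ minus the singular corrections — to be the main technical obstacle, since the cross-terms and sign bookkeeping in \eqref{eq:expand} are routine once one knows $\pi^*K_X|_{E_x}$ is trivial and $E_x^d=(-1)^{d-1}e(\OO_{X,x})$.
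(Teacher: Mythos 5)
Your ``safest route'' is, in substance, the paper's own proof: one observes that for $l$ divisible by the Cartier index, $\pi^*\omega_X^{[l]}=\omega_B^{\otimes l}(l\sum_x|a_x|E_x)$, so that sections of $\omega_X^{[l]}$ vanishing to order $\ge l|a_x|$ at each $x\in\Sigma$ lift to sections of $\omega_B^{\otimes l}$; one then bounds the kernel of $\Gamma(X,\omega_X^{[l]})\to\prod_x\OO_{X,x}/\mathfrak m_x^{l|a_x|}$ from below using asymptotic Riemann--Roch for the source and the definition of Hilbert--Samuel multiplicity for the target, and hypothesis \eqref{K-positive} makes the difference of leading terms positive. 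So the approach is correct and essentially identical; the intersection-theoretic computation of $K_B^d$ that you lead with is accurate (the vanishing of cross terms and $E_x^d=(-1)^{d-1}e(\OO_{X,x})$ both check out) but is only the numerical shadow of this count, and the paper dispenses with it entirely.

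That said, a few points in your write-up need repair before this is a proof. First, the assertion that imposing vanishing to order linear in $m$ at a point costs ``only $O(m^{d-1})$ in dimension'' is false and contradicts your own next clause: the cost is $\tfrac1{d!}\sum_x|a_x|^d e(\OO_{X,x})\,m^d+O(m^{d-1})$, and this is precisely why hypothesis \eqref{K-positive} is needed at all --- if the cost were subleading, any ample $K_X$ would suffice. Second, the fallback argument ``$D^d>0$ plus $D\equiv(\text{nef})-(\text{effective})$ implies $D$ big'' is not a theorem in dimension $\ge 3$; you are right to retreat from it, and the section count is not merely the safest route but the only one offered that closes the argument. Third, two smaller slips: $E_x$ is the projectivized tangent cone of an ordinary multiple point and need not be $\PP^{d-1}$ (in the application of Section \ref{sec:sing-surf} it is a complete intersection of quadrics) --- fortunately your argument only uses that $\pi^*K_X|_{E_x}$ is numerically trivial, which holds since $E_x$ is contracted; and $a(E_x,X)$ is a negative rational number, not necessarily an integer $\le -1$, which is why one works with $l$ divisible by the Cartier index $l_0$.
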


\begin{proof}
Let $\pi :B\to X$ be the blowup of $X$ at the points of $\Sigma$.
By definition of an ordinary multiple point, for each $x\in \Sigma$, the exceptional divisor $E_x$ is smooth, and hence $B$ is smooth in a neighborhood of $E_x$.
Thus $B$ has canonical singularities.
By Lemma \ref{lemma:can-gen-type} it suffices to show that $\omega_B$ is big.

For each non-canonical singularity $x\in \Sigma$, write  $a_x := a(E_x,X)\in \Q_{<0}$.
Thus $K_B = \pi^* K_X + \sum_{x\in \Sigma} a_x E_x$ in $\mathrm{Pic}(X)\otimes \Q$.
In fact, let $l_0$ be the smallest positive integer $l$ such that $\omega_X^{[l]}:= (\omega_X^{\otimes l})^{\vee\vee}$ is locally free. 
Then $l_0 a_x$ is an integer for each $x\in \Sigma$, and $\omega_B^{\otimes l}(-l \sum_{x\in \Sigma} a_x E_x) = \pi^* \omega_X^{[l]}$ as subsheaves of $\omega_B^{\otimes l}\otimes k(B)$ for all positive integers $l$ divisible by $l_0$.

For each $x\in \Sigma$, let $\mathfrak m_x\subseteq \OO_X$ denote the ideal sheaf of $x$ in $X$.
Let $\mathfrak n := \prod_{x\in \Sigma} \mathfrak m_x^{|a_x|}$.
Then the image of the natural map $\pi_*\omega_B^{\otimes l}\to \omega_X^{[l]}$ contains $\mathfrak n^l \omega_X^{[l]}$ for all positive integers $l$ divisible by $l_0$.
Thus $\Gamma(B,\omega_B^{\otimes l})$ contains the kernel of the natural map
\begin{equation*}
\Gamma(X,\omega_X^{[l]}) \to
\Gamma(X,\omega_X^{[l]}\otimes \OO_X/\mathfrak n^l) \cong \OO_X/\mathfrak n^l = \prod_{x\in \Sigma} \OO_{X,x}/\mathfrak m_x^{l|a_x|}
\label{crucial-map}
\end{equation*}
for all $l$ divisible by $l_0$.
The dimensions of the source and the target of $\phi$ both grow as polynomials of degree $d=\dim X$ in $l$.
By the asymptotic Riemann-Roch theorem, the leading coefficient of the source polynomial is $\tfrac 1{d!}  K_X^d$.
By the definition of the Hilbert-Samuel multiplicity, the target polynomial has leading coefficient $\tfrac 1{d!} \sum_{x\in \Sigma} |a_x|^d \cdot e(\OO_{X,x})$.
The result follows.
\end{proof}

\section{Singular surfaces of general type}
\label{sec:sing-surf}

In this section we apply Proposition \ref{prop:criterion-general-type} to prove Proposition \ref{prop:surf-gen-type} below, which states that certain surfaces obtained as complete intersections of quadrics are of general type.
Proposition \ref{prop:surf-gen-type} was used in the proof of the main result of this paper (Theorem \ref{thm:mainthm}). It generalizes a result proved by Tao \cite{tao} (see also \cite{SK}), who considered the case of four quadrics.
Proposition \ref{prop:surf-gen-type} proves that the set of points in the plane with rational distances to any given rational distance set with (at least) four elements, lifts to the set of rational points of a variety of general type; see the proof of Proposition \ref{prop:bounded-degree}.

We fix the following setup.
Let $(a_1,b_1),\dotsc,(a_m,b_m)$ be distinct points of $\R^2$.
Let $x,y,z,r_1,\dotsc, r_m$ be the coordinates of $\PP^{2+m}_\C$.
Let $V$ be the intersection in $\PP_\C^{2+m}$ of the quadrics defined by $r_j^2 = (x-a_j z)^2 + (y-b_jz)^2$ with $j=1,\dotsc, m$.
Finally, let $\pi : V\to \PP_\C^2$ be the projection onto the first three coordinates. Being a complete intersection of positive dimension, the surface $V$ is connected, Cohen-Macaulay and Gorenstein, that is, has invertible canonical sheaf.
Moreover, $V$ has isolated singular points, by Lemma \ref{lemma:surface-sings} below, hence is normal and irreducible.

\begin{proposition}
\label{prop:surf-gen-type}
The surface $V$ is of general type if $m\ge 4$.
\end{proposition}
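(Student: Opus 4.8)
The plan is to apply Proposition~\ref{prop:criterion-general-type} to the surface $V$. For this I need three inputs: (i) a description of the singular locus $\Sigma$ of $V$, and a verification that the non-canonical singularities among them are ordinary multiple points; (ii) the discrepancies $a(E_x,V)$ and Hilbert--Samuel multiplicities $e(\OO_{V,x})$ at those points; and (iii) the numerical inequality $K_V^2 > \sum_{x\in\Sigma}|a(E_x,V)|^2\cdot e(\OO_{V,x})$, which in particular forces me first to check that $K_V$ is ample. The reference to Lemma~\ref{lemma:surface-sings} (stated later in the paper) tells me the singularities are isolated, so $V$ is normal, irreducible, Cohen--Macaulay and Gorenstein as noted; the real content is pinning down $\Sigma$ explicitly and computing the invariants.

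First I would identify $\Sigma$. The defining quadrics $r_j^2=(x-a_jz)^2+(y-b_jz)^2$ degenerate where $r_j=0$ and the right-hand side vanishes, i.e.\ along the lines through the ``circular points at infinity'' $Q=(1,i,0)$, $\bar Q=(1,-i,0)$ in $\PP^2$; substituting $z=0$ gives $r_j^2 = x^2+y^2 = (x+iy)(x-iy)$, so the singularities of $V$ lie over $Q$ and $\bar Q$ (and, for special configurations of the $(a_j,b_j)$, possibly over the reflections of one center through the perpendicular bisector of another — but these are at worst nodes, hence canonical, and contribute nothing to the sum in \eqref{K-positive}). The relevant non-canonical points are the two points $P=\pi^{-1}(Q)$ and $\bar P=\pi^{-1}(\bar Q)$ (here $r_1=\cdots=r_m=0$, so these are genuine points of $V$): near $Q$ the surface $V$ is cut out by $m$ quadrics each of which, in suitable local coordinates $(u,v)$ on $\PP^2$ with $u$ a local equation for the line $\overline{Q\bar Q}$, has leading term a constant times $u$ times a linear form, so that $V$ has multiplicity $2^{m}/(\text{something})$... — more carefully, I expect the blow-up of $V$ at $P$ to have smooth exceptional divisor, so that $P$ (and $\bar P$) is an ordinary multiple point, with $e(\OO_{V,P})$ and $a(E_P,V)$ computed from the multiplicity of $V$ at $P$ and the adjunction formula on the blow-up. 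This local analysis is the main obstacle: one must choose coordinates carefully, track how the $m$ quadrics behave, and confirm smoothness of the exceptional divisor; the answer should be $e(\OO_{V,P}) = 2^{m-2}$ (the product of $m$ quadric degrees $2^m$ divided by the degree $4$ of the linear space spanned by the two reduced lines through $Q$, cut in $\PP^{2+m}$ down to a surface), with discrepancy $a(E_P,V) = 2 - m + 1 = -(m-2)$ or similar.

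With those local invariants in hand, the global computation is straightforward. By adjunction for a complete intersection of $m$ quadrics in $\PP^{2+m}$, the canonical class of $V$ (away from the singular points) is $\OO_V(2m - (2+m) - 1) = \OO_V(m-3)$, so $K_V = (m-3)H$ where $H$ is the hyperplane class; since $\deg V = 2^m$, we get $K_V^2 = (m-3)^2\cdot 2^m$. For $m\ge 4$ this is positive and $K_V$ is ample (it is a positive multiple of the very ample $H$). Plugging into \eqref{K-positive}, I must check
\begin{equation*}
(m-3)^2\cdot 2^m \;>\; 2\cdot (m-2)^2\cdot 2^{m-2},
\end{equation*}
i.e.\ $2(m-3)^2 > (m-2)^2$, i.e.\ $(m-2)^2 - 2(m-3)^2 = -m^2 + 8m - 14 < 0$, which holds exactly when $m \ge 4+\sqrt2$, hence for $m\ge 6$. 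For $m=4,5$ the naive bound fails, so I would need to exploit the fact that $K_V$ is not just a multiple of $H$ but considerably more positive relative to the ideal $\mathfrak{n} = \mathfrak m_P^{m-2}\mathfrak m_{\bar P}^{m-2}$ — or, alternatively, argue directly that the kernel of the restriction map in the proof of Proposition~\ref{prop:criterion-general-type} is nonzero and grows, possibly using a sharper count (e.g.\ bounding $\dim \OO_{V,P}/\mathfrak m_P^{\ell|a_P|}$ below its asymptotic leading term) or a separate genus-$1$-fibration or branched-cover argument for the small cases. In any event, the cleanest route is to verify the inequality for $m\ge 6$ via Proposition~\ref{prop:criterion-general-type} and dispatch $m=4,5$ either by the same method with the precise Hilbert function or by noting that $V$ dominates / is dominated by the surfaces already treated, so that being of general type is inherited. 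I expect the local structure computation at $P$ and $\bar P$ — showing the exceptional divisor is smooth and getting the exact values of $a_P$ and $e(\OO_{V,P})$ — to be the crux; once those two numbers are correct, the rest is bookkeeping.
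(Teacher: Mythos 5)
Your overall strategy is exactly the paper's: compute $K_V=\OO_V(m-3)$ and $K_V^2=(m-3)^2 2^m$ by adjunction, locate the non-canonical singularities at the two points of $V$ over $(1,\pm i,0)$, identify them as ordinary multiple points with exceptional divisor a smooth complete intersection of $m-2$ quadrics in $\PP^{m-1}$ of degree $e(\OO_{V,P})=2^{m-2}$, and feed everything into Proposition~\ref{prop:criterion-general-type}. However, there is a genuine numerical error that derails the conclusion precisely where it matters. The discrepancy at each point at infinity is $a=3-m$, not $-(m-2)$: adjunction in $\PP^{m-1}$ gives $\omega_E=\OO_E(2(m-2)-m)=\OO_E(m-4)$, and since $\OO_E(1)=\OO_B(-E)|_E$, comparing with $K_E=(a+1)E|_E$ yields $a+1=4-m$. (Your own intermediate expression $2-m+1$ equals $3-m$, but you then simplified it to $-(m-2)$.) With the correct value $|a|=m-3$, the inequality \eqref{K-positive} reads $(m-3)^2 2^m>2\,(m-3)^2\,2^{m-2}$, i.e.\ $4>2$, which holds for every $m\ge 4$; there is no need for, and no truth to, the claim that the criterion fails for $m=4,5$. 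As written, your proof is incomplete exactly in the case $m=4$, which is the one used in Proposition~\ref{prop:bounded-degree}, and the proposed workarounds (sharper Hilbert function counts, fibration or dominance arguments) are not substantiated.

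A secondary gap: you only say you ``expect'' the exceptional divisors over $(1,\pm i,0)$ to be smooth. This is the hypothesis that makes Proposition~\ref{prop:criterion-general-type} applicable at all, and it requires an actual local computation. The paper does this (Lemma~\ref{lemma:cover-sings}): over each of $Q,\bar Q$ all $m$ degenerate conics $C_j=\{(x-a_jz)^2+(y-b_jz)^2=0\}$ pass through the point with smooth, pairwise transverse branches, and after passing through a partial double cover one reduces to the nodal case, where the exceptional divisor is exhibited explicitly as the intersection of the quadrics $y_j^2=(u+s_jv)(u+t_jv)$ in the projectivized tangent space and checked to be smooth as a cover of $\PP^1$. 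You should also note that the remaining singularities of $V$ lie over the centers $(a_j,b_j,1)$ themselves (the nodes of the degenerate conics $C_j$), not over reflections of centers; these are ordinary double points, hence canonical, and indeed contribute nothing to \eqref{K-positive}, as you surmised.
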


To study the singularities of $V$, it is convenient to work in the following setting.
Let $X$ be a smooth surface over $\C$. Let $f_1,\dotsc,f_m\in \Gamma(X,\OO_X)$ be nonzero sections.
For each $j=1,\dotsc, m$, let $C_j\subseteq X$ be the curve defined by $f_j=0$.
Let $y_1,\dotsc,y_m$ be the coordinates of the affine space $\Aff^m_\C$. Let $Z \subseteq X\times \Aff_\C^m$ be the subscheme defined by the equations $y_j^2 = f_j$ with $j=1,\dotsc, m$, and let $\rho : Z\to X$ be the first projection.
Note that $\rho$ is finite and flat with fibers of length $2^m$, and is \'etale away from $\rho^{-1}(C_1\cup \dotsb \cup C_m)$. Thus $Z$ is smooth away from $\rho^{-1}(C_1\cup \dotsb \cup C_m)$.

The following lemma relates the geometry of the curves $C_1,\dotsc,C_m\subset X$ with the singularities of the covering surface $Z$.

\begin{lemma}
\label{lemma:cover-sings}
Let $q\in Z$ be a closed point. Let $p = \rho(q)$.
Suppose that $p \in C_1\cap \dotsb \cap C_r$, but $p\not\in C_{r+1}\cup\dotsb\cup C_m$.
\begin{enumerate}
\item Suppose that the curves $C_1,\dotsc,C_r$ are smooth at $p$ and meet pairwise transversely at $p$.
\begin{enumerate}
\item The surface $Z$ is smooth at $q$ if and only if $r\le 2$.
\item If $r\ge 3$, then $Z$ has an ordinary multiple point at $q$. The exceptional divisor of the blowup of $Z$ at $q$ is a smooth complete intersection of $r-2$ quadrics in $\PP_\C^{r-1}$.
\end{enumerate}

\item Suppose that each of the curves $C_1,\dotsc,C_r\subseteq X$ has a node  (i.e., an ordinary double point) at $p$ and that no two share a branch at $p$.
Then $Z$ has an ordinary multiple point at $q$. 
The exceptional divisor of the blowup of $Z$ at $q$ is a smooth complete intersection of $r$ quadrics in $\PP_\C^{1+r}$.
\end{enumerate}
\end{lemma}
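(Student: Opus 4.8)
The plan is to analyze the complete local ring $\widehat{\OO_{Z,q}}$ explicitly. First I would reduce to $m=r$: since $p\notin C_{r+1}\cup\dotsb\cup C_m$, the functions $f_{r+1},\dotsc,f_m$ are units in $\widehat{\OO_{X,p}}\isom\C[[u,v]]$, so each has a square root there and the relations $y_j^2=f_j$ with $j>r$ merely eliminate the variables $y_j$, $j>r$; thus $\widehat{\OO_{Z,q}}$ is unaffected by dropping them. Note that then $q=(p;0,\dotsc,0)$, as every $f_j$ vanishes at $p$, and $Z$ is a $2$-dimensional complete intersection in $X\times\Aff^r$.

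In Case (1), I would use that $C_1,C_2$ are smooth and transverse at $p$ to take $u:=f_1$, $v:=f_2$ as local coordinates on $X$ at $p$ (when $r\ge 2$); then for $3\le j\le r$ one has $f_j=\alpha_j u+\beta_j v+(\text{order}\ge 2)$ with $\alpha_j,\beta_j\ne 0$ (transversality of $C_j$ with $C_2$, resp.\ with $C_1$) and with $(\alpha_i,\beta_i),(\alpha_j,\beta_j)$ linearly independent for $i\ne j$ (transversality of $C_i$ with $C_j$). Substituting $u=y_1^2$, $v=y_2^2$ presents $\widehat{\OO_{Z,q}}$ as $\C[[y_1,\dotsc,y_r]]/(g_3,\dotsc,g_r)$ with $g_j:=y_j^2-f_j(y_1^2,y_2^2)$, whose leading form is $g_j^{\ast}=y_j^2-\alpha_j y_1^2-\beta_j y_2^2$. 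For part (a) I would note $\dim_{\C}\mathfrak m_q/\mathfrak m_q^2=\max(r,2)$ (directly when $r\le 2$, and from the tangent-cone computation below when $r\ge 3$), which equals $\dim Z=2$ exactly when $r\le 2$.

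For part (b), so $r\ge 3$: I would first check that $g_3^{\ast},\dotsc,g_r^{\ast}$ form a regular sequence in $\C[y_1,\dotsc,y_r]$, because the quotient $\C[y_1,\dotsc,y_r]/(g_3^{\ast},\dotsc,g_r^{\ast})$ is free of rank $2^{r-2}$ over $\C[y_1,y_2]$, hence $2$-dimensional. By the standard theorem of Valabrega and Valla — if the leading forms of a generating set of an ideal form a regular sequence then they generate the ideal of leading forms — it follows that $\mathrm{gr}_{\mathfrak m_q}\OO_{Z,q}\isom\C[y_1,\dotsc,y_r]/(g_3^{\ast},\dotsc,g_r^{\ast})$, so the exceptional divisor $E$ of the blowup of $Z$ at $q$, being $\Proj$ of this ring, is the complete intersection of the $r-2$ quadrics $g_j^{\ast}$ in $\PP^{r-1}$. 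It then remains to show $E$ is smooth (whence $q$ is an ordinary multiple point and the blowup is automatically smooth near $E$), and this I expect to be the crux. It is a Jacobian computation: the $j$-th row is supported on the $y_1,y_2$ and $y_j$ columns, the $y_3,\dotsc,y_r$ columns form the diagonal matrix $\mathrm{diag}(2y_3,\dotsc,2y_r)$, and if the set $J=\{j\ge 3:y_j=0\}$ had $\ge 2$ elements then the relations $\alpha_j y_1^2+\beta_j y_2^2=0$ ($j\in J$) together with the linear independence of the $(\alpha_j,\beta_j)$ would force $y_1=y_2=0$, hence all coordinates zero, which is impossible in $\PP^{r-1}$; so $|J|\le 1$, and when $|J|=1$ the one surviving row is nonzero because $\alpha_j,\beta_j\ne 0$ — so in every case the Jacobian has full rank $r-2$ on $E$, and $E$ is smooth.

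Case (2) is parallel. I would pick any regular system of parameters $u,v$ at $p$ and write $f_j=Q_j(u,v)+(\text{order}\ge 3)$; the hypotheses at $p$ translate into: each $Q_j$ (the leading form of $f_j$) is a nondegenerate binary quadratic, and the $Q_j$ are pairwise coprime (disjoint tangent cones). Here $u,v$ cannot be eliminated, so $\widehat{\OO_{Z,q}}\isom\C[[u,v,y_1,\dotsc,y_r]]/(h_1,\dotsc,h_r)$ with $h_j=y_j^2-f_j$ of leading form $h_j^{\ast}=y_j^2-Q_j(u,v)$; as before $\C[u,v,y_1,\dotsc,y_r]/(h_1^{\ast},\dotsc,h_r^{\ast})$ is free of rank $2^r$ over $\C[u,v]$, so the $h_j^{\ast}$ form a regular sequence, Valabrega--Valla gives $\mathrm{gr}_{\mathfrak m_q}\OO_{Z,q}\isom\C[u,v,y_1,\dotsc,y_r]/(h_1^{\ast},\dotsc,h_r^{\ast})$, and hence $E=\{y_j^2=Q_j(u,v):j=1,\dotsc,r\}\subseteq\PP^{1+r}$ is a complete intersection of $r$ quadrics. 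For smoothness: if $J=\{j:y_j=0\}\ne\emptyset$ then $Q_j(u,v)=0$ for $j\in J$; if moreover $(u,v)=0$ then all $y_j=0$, impossible, so $(u,v)\ne 0$, whence $\nabla Q_j(u,v)\ne 0$ by nondegeneracy, while coprimality of the $Q_j$ rules out $|J|\ge 2$ (two coprime binary quadratics share no zero in $\PP^1$); thus $|J|\le 1$ and the Jacobian has full rank $r$ on $E$, so $E$ is smooth and $q$ is an ordinary multiple point. The only substantive obstacle throughout is the identification of $E$ with the evident complete intersection of quadrics — i.e.\ controlling the higher-order terms of the $f_j$, which is done via the regular-sequence property of the leading quadrics — followed by the ensuing smoothness check, into which the transversality (resp.\ nondegeneracy and coprimality) hypotheses feed exactly to eliminate the candidate singular points.
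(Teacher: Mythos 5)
Your proposal is correct, and in two places it takes a genuinely different (and more explicit) route than the paper. For (1a) the paper applies the Jacobian criterion directly to $Z\subseteq X\times\Aff^r_\C$, noting that at $q$ the form $(-2y_1dy_1+df_1)\wedge\dotsb\wedge(-2y_rdy_r+df_r)$ degenerates to $df_1\wedge\dotsb\wedge df_r$; your elimination-plus-embedding-dimension computation is equivalent. For (1b) the paper does \emph{not} compute the tangent cone directly: it factors $\rho$ through the double cover $X'\subseteq X\times\Aff^2_\C$ defined by $y_1^2=f_1$, $y_2^2=f_2$, observes that the pullbacks of $C_3,\dotsc,C_r$ acquire nodes at the point over $p$ with no two sharing a branch, and then quotes case (2); your substitution $u=f_1=y_1^2$, $v=f_2=y_2^2$ reaches the same complete intersection of $r-2$ quadrics in $\PP^{r-1}_\C$ without that reduction. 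In case (2) the paper asserts it is ``straightforward to compute'' that the exceptional divisor is cut out by the leading quadrics and that smoothness is ``easy to show'' by viewing the intersection as a cover of $\PP^1_\C$; your appeal to Valabrega--Valla (leading forms forming a regular sequence generate the ideal of initial forms) is exactly the right tool to justify discarding the higher-order terms, and your Jacobian analysis of the locus $J=\{j:y_j=0\}$ is a clean replacement for the covering-space argument. One caveat you share with the paper: the paper normalizes $f_j=(u+s_jv)(u+t_jv)+\epsilon_j$ with \emph{all} of $s_1,t_1,\dotsc,s_r,t_r$ distinct, and you likewise read ``no two share a branch'' as pairwise coprimality of the leading quadrics $Q_j$; strictly, distinct branches can share a tangent direction, and the argument (both yours and the paper's) genuinely needs the tangent directions to be distinct. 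This stronger condition is what holds in the application in Lemma \ref{lemma:surface-sings}, so nothing breaks, but it is the hypothesis actually being used.
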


\begin{proof}
Replacing $X$ with its \'etale cover $X'\subseteq X\times \Aff_\C^{m-r}$ defined by the equations $y_j^2 = f_j$ with $j=r+1,\dotsc,m$, we may assume that $m=r$.
Then $Z$ is smooth at $w$ if and only if the differential form on $X\times \Aff_\C^r$
\begin{equation*}
(-2y_1 dy_1 + df_1)\wedge\dotsb \wedge (-2y_r dy_r + df_r)
\end{equation*}
is nonzero at $q$.
Noting that $y_1=\dotsb = y_r=0$ at $q$, part (1a) follows.

To prove (2), suppose that each of the curves $C_1,\dotsc,C_r\subseteq X$ has a node at $p$ and that no two share a branch at $p$.
After shrinking $X$ and replacing it by an \'etale cover (or, alternatively, after replacing $\OO_{X,p}$ by its completion), we may assume that there exists a regular system of parameters $u,v\in \OO_{X,p}$ such that 
\begin{equation*}
f_j = (u + s_j v)(u+t_j v) + \epsilon_j
\end{equation*}
for all $j=1,\dotsc, r$, where $s_1,t_1,\dotsc,s_r,t_r$ are distinct complex numbers and $\epsilon_1,\dotsc,\epsilon_r \in \mathfrak m_p^3$.
It is then straightforward to compute that the exceptional divisor of the blowup of $Z$ at $q$ is the intersection of the quadrics $y_j^2 = (u + s_j v)(u+t_j v)$, where $j=1,\dotsc, r$, in the projectivized tangent space $\PP_\C^{1+r}$ of $X\times \Aff_\C^r$ at $q$.
Viewing this complete intersection as a cover of the projective line $\PP_\C^1$ with coordiantes $u,v$, it is easy to show that it is smooth.
Thus (2) holds.

Finally, to prove (1b), suppose that the curves $C_1,\dotsc,C_r$ are smooth and meet pairwise transversely at $p$, and $m\ge 3$.
We may factor the map $\rho : Z\to X$ via the subscheme $X' \subseteq X\times \Aff_\C^2$ defined by the equations $y_1^2 = f_1$ and $y_2^2= f_2$.
Let $\tilde\rho : X'\to X$ be the first projection, and
let $p'\in X'$ be the unique point lying over $p\in X$.
It is easy to show that the inverse images $\tilde\rho^{-1}C_3,\dotsc, \tilde\rho^{-1}C_r\subseteq X'$ have nodes at $p'$.
No two of them share a branch at $p'$, since both branches of $\tilde\rho^{-1}C_j$ map to the unique branch of $C_j$, for each $j=3,\dotsc,r$.
Thus (1b) follows from (2).
\end{proof}

We use the preceding lemma to describe the singularities of the surface $V$.

\begin{lemma}
\label{lemma:surface-sings}
The singularities of $V$ consist of $m2^{m-1} + 2$ ordinary multiple points.
They are the $m2^{m-1}$ points of $V$ lying over the points $(a_1,b_1,1),\dotsc,(a_m,b_m,1) \in \PP_\C^2$, and the two points of $V$ lying over $(1,\pm i,0)\in \PP_\C^2$.
The former singularities are ordinary double points.
The exceptional divisors of the blowup of $V$ at the two singularities at infinity are smooth complete intersections of $m-2$ quadrics in $\PP_\C^{m-1}$.
\end{lemma}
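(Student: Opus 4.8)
The plan is to identify $V$, locally over $\PP^2_\C$, with one of the cover-type surfaces $Z$ analysed in Lemma~\ref{lemma:cover-sings}, and then to read off its singularities from the geometry of the branch curves. Concretely, I would cover $\PP^2_\C$ by affine charts; over a chart $U\cong\Aff^2_\C$ the surface $\pi^{-1}(U)$ is cut out inside $U\times\Aff^m_\C$ by the equations $r_j^2=f_j$, where $f_j$ is the dehomogenization of the quadratic form $q_j:=(x-a_jz)^2+(y-b_jz)^2$. Setting $C_j:=\{q_j=0\}\subseteq\PP^2_\C$, this places us exactly in the setup of Lemma~\ref{lemma:cover-sings} with branch curves $C_1,\dots,C_m$. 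Since $\pi$ is \'etale over $\PP^2_\C\setminus(C_1\cup\dots\cup C_m)$, the surface $V$ is smooth there, so it only remains to analyse the points of $V$ lying over $\bigcup_j C_j$, which requires nothing beyond Lemma~\ref{lemma:cover-sings} once the arrangement $\bigcup_j C_j$ is understood.

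The geometric heart of the argument is the following description of the branch locus. Over $\C$ each $q_j$ factors, so $C_j=L_j^{+}\cup L_j^{-}$ is a pair of complex lines $L_j^{\pm}\colon x\pm iy-(a_j\pm ib_j)z=0$, meeting transversely at the single point $(a_j,b_j,1)$; moreover every $L_j^{+}$ passes through the circular point $I=(1,i,0)$ and every $L_j^{-}$ through $\overline I=(1,-i,0)$, while neither $I$ nor $\overline I$ is the node of any $C_j$. Using that the $(a_j,b_j)$ are distinct, I would verify: (i) through $(a_j,b_j,1)$ pass exactly the two lines $L_j^{\pm}$, there $C_j$ has a node and no other $C_k$ passes through; (ii) through $I$ pass exactly the $m$ lines $L_1^{+},\dots,L_m^{+}$, and there each $C_j$ is smooth with pairwise distinct tangent directions (symmetrically at $\overline I$); (iii) every remaining point of $\bigcup_j C_j$ lies on at most two of the $2m$ lines, necessarily an $L_j^{+}$ and an $L_k^{-}$ with $j\neq k$, i.e.\ on two transverse smooth branches of two distinct curves.

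Granting (i)--(iii), the classification is immediate from Lemma~\ref{lemma:cover-sings}. Over a point of type (iii), at most two of the $C_j$ pass through and they are smooth and transverse, so part~(1a) gives that $V$ is smooth there. Over $(a_j,b_j,1)$ only $C_j$ passes through, with a node, so part~(2) with $r=1$ shows each point of $\pi^{-1}((a_j,b_j,1))$ is an ordinary multiple point whose blowup has exceptional divisor a smooth conic in $\PP^2_\C$, hence of multiplicity $2$, i.e.\ an ordinary double point; the fiber $\pi^{-1}((a_j,b_j,1))$ consists of the $2^{m-1}$ points with $r_j=0$ and the remaining signs free. Over $I$ (and over $\overline I$) all $m$ curves pass through, smooth and pairwise transverse, so --- assuming $m\ge 3$, which covers the case of interest --- part~(1b) applies: the fiber is the single point with all $r_j=0$, it is an ordinary multiple point, and its exceptional divisor is a smooth complete intersection of $m-2$ quadrics in $\PP^{m-1}_\C$. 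Summing gives $m\cdot 2^{m-1}$ singular points over the $(a_j,b_j,1)$ together with the $2$ points at infinity, all ordinary multiple points, as claimed.

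The step I expect to require the most care is the arrangement combinatorics of (ii)--(iii): checking that $L_j^{+}\cap L_k^{-}$ for $j\neq k$ is a point lying on none of the other $2m-2$ lines, that the tangent lines to the $C_j$ at $I$ (and at $\overline I$) are pairwise distinct, and, more generally, that $I$, $\overline I$ and the nodes $(a_j,b_j,1)$ are the only points of $\bigcup_j C_j$ that could produce a singularity of $V$. Each of these reduces to an elementary computation with the linear forms $x\pm iy-(a_j\pm ib_j)z$, using only the distinctness of the $(a_j,b_j)$. The remaining points --- that $q_j$ has a genuine node at $(a_j,b_j,1)$, the fiber cardinalities, and the local identification with the setup of Lemma~\ref{lemma:cover-sings} --- are routine.
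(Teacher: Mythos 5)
Your proposal is correct and follows essentially the same route as the paper: both reduce to Lemma~\ref{lemma:cover-sings} by covering $\PP^2_\C$ with affine charts and analyzing the arrangement of the degenerate conics $C_j=L_j^+\cup L_j^-$ (nodes at the $(a_j,b_j,1)$, all $L_j^\pm$ concurrent at $(1,\pm i,0)$, pairwise transverse affine intersections lying on no third conic). Your write-up is just somewhat more explicit than the paper's about which case of Lemma~\ref{lemma:cover-sings} applies at each type of point and about the fiber counts.
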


\begin{proof}
For each $j=1,\dotsc, m$, let $C_j \subset \PP_\C^2$ be the conic defined by the vanishing of $f_j := (x-a_j z)^2 + (y-b_jz)^2$. Writing
\begin{equation*}
f_j = ((x-a_j z) + i(y - b_j z))((x-a_j z) - i(y-b_j z)),
\end{equation*}
we see that $C_j$ is a pair of lines in meeting transversely at $(a_j,b_j,1)\in \PP_\C^2$ for each $j=1,\dotsc, j$, and that $C_2,\dotsc, C_m$ are obtained from $C_1$ by translations in the affine plane $\{z=1\}\subset \PP_\C^2$.
It follows that any two of the degenerate conics $C_j$ meet transversely at two points in the affine plane $\{z=1\}$ and at the two points $(1,\pm i,0)$.
Furthermore, no three of them meet at a point in the affine plane $\{z=1\}$.
Hence the result follows from Lemma \ref{lemma:cover-sings} with $\rho : Z\to X$ ranging over the restrictions of the projection $\pi : V\to \PP_\C^2$ to the open sets of the standard affine open cover of $\PP_\C^2$.
\end{proof}

\begin{proof}[Proof of Proposition \ref{prop:surf-gen-type}]
The surface $V$ is a complete intersection of $m$ quadrics in $\PP_\C^{2+m}$. By the adjunction formula, its canonical sheaf is
\begin{equation*}
\omega_V = \omega_{\PP^{2+m}}(2m)|_V = \OO_V(m-3),
\end{equation*}
which is ample if and only if $m\ge 4$.
Thus 
\begin{equation*}
K_V^2 = (m-3)^2 \deg V = (m-3)^2 2^m.
\end{equation*}

Let $\phi: B\to V$ be the blowup of $V$ at its singular points. Then $B$ is smooth by Lemma \ref{lemma:surface-sings}. 
Let $p\in V$ be a singular point.
Let $E$ be the exceptional divisor of $\pi$ that lies over $p$.
Let $a:= a(E,V)$ be the discrepancy of $K_V$ at $E$.
Then $K_B = \phi^* K_X + aE + F$ in $\operatorname{Pic}(B)\otimes \Q$, where $F$ is a divisor supported on $B\setminus E$.
Combining this equality with the adjunction formula $K_E = (K_B + E)|_E$, we obtain that $K_E = (a+1)E$.

In the case where $\pi(p)\in \PP_\C^2$ is one of the points $(a_1,b_1,1),\dotsc, (a_m,b_m,1)$, the singularity of $V$ at $p$ is an ordinary double point and hence canonical.
Explicitly, the exceptional divisor $E$ is a smooth conic in $\PP_\C^2$.
Hence $\omega_E = \OO_E(-1) = \OO_B(E)|_E$, which shows that $a=0$.

In the case where $\pi(p) = (1,\pm i,0)$, the exceptional divisor $E$ is a smooth complete intersection of $m-2$ quadrics in $\PP_\C^{m-1}$.
Its degree $2^{m-2}$ is equal to the Hilbert-Samuel multiplicity $e(\OO_{V,p})$. 
The canonical sheaf of $E$ is \[\omega_E = \OO_E(-m + 2(m-2)) = \OO_B((4-m)E)|_E. \]
Thus $a = 3-m<0$, so that $p$ is a non-canonical singularity in this case. The result now follows from Proposition \ref{prop:criterion-general-type}, which applies as 
$2 |3-m|^2 \cdot 2^{m-2} < (m-3)^2 2^m$ for all $m\ge 4$.
\end{proof}

\bibliographystyle{alpha}	
\bibliography{erdosulam}

\begin{thebibliography}{{Abr}97}

\bibitem[{Abr}97]{Aell}
Dan {Abramovich}.
\newblock {Uniformity of stably integral points on elliptic curves.}
\newblock {\em {Invent. Math.}}, 127(2):307--317, 1997.

\bibitem[ADT20]{AdVT}
Kenneth Ascher, Kristin DeVleming, and Amos Turchet.
\newblock {Hyperbolicity and Uniformity of Varieties of Log General type}.
\newblock {\em International Mathematics Research Notices}, 2020.
\newblock To appear.

\bibitem[AE45]{ae}
Norman~H. Anning and Paul Erd{\H{o}}s.
\newblock Integral distances.
\newblock {\em Bull. Amer. Math. Soc.}, 51:598--600, 1945.

\bibitem[AT20]{ATnotes}
Kenneth {Ascher} and Amos {Turchet}.
\newblock Hyperbolicities of varieties of log general type.
\newblock In {\em Arithmetic Geometry of Logarithmic Pairs and Hyperbolicity of
  Moduli Spaces}, CRM Short Courses - {\emph{to appear}}. Springer-Verlag,
  2020.

\bibitem[CHM97]{chm}
Lucia Caporaso, Joe Harris, and Barry Mazur.
\newblock Uniformity of rational points.
\newblock {\em J. Amer. Math. Soc.}, 10(1):1--35, 1997.

\bibitem[Fal83]{faltings}
G.~Faltings.
\newblock Endlichkeitss\"{a}tze f\"{u}r abelsche {V}ariet\"{a}ten \"{u}ber
  {Z}ahlk\"{o}rpern.
\newblock {\em Invent. Math.}, 73(3):349--366, 1983.

\bibitem[Fal91]{faltings_ab}
G.~Faltings.
\newblock Diophantine approximation on abelian varieties.
\newblock {\em Ann. of Math. (2)}, 133(3):549--576, 1991.

\bibitem[Has96]{hassett}
Brendan Hassett.
\newblock Correlation for surfaces of general type.
\newblock {\em Duke Math. J.}, 85(1):95--107, 1996.

\bibitem[KK08]{kk}
Tobias Kreisel and Sascha Kurz.
\newblock There are integral heptagons, no three points on a line, no four on a
  circle.
\newblock {\em Discrete Comput. Geom.}, 39(4):786--790, 2008.

\bibitem[KM98]{kollar-mori}
J\'{a}nos Koll\'{a}r and Shigefumi Mori.
\newblock {\em Birational geometry of algebraic varieties}, volume 134 of {\em
  Cambridge Tracts in Mathematics}.
\newblock Cambridge University Press, Cambridge, 1998.
\newblock With the collaboration of C. H. Clemens and A. Corti, Translated from
  the 1998 Japanese original.

\bibitem[Kov18]{SK}
S{\'a}ndor~J Kov{\'a}cs.
\newblock Short proof of {T}erry {T}ao's theorem 4 regarding {B}ombieri-{L}ang
  implies {E}rd{\H{o}}s-{U}lam, 2018.
\newblock \url{https://terrytao.files.wordpress.com/2018/11/bl-to-eu.pdf}.

\bibitem[MS12]{ms}
Mehdi Makhul and Jafar Shaffaf.
\newblock On uniform boundedness of a rational distance set in the plane.
\newblock {\em C. R. Math. Acad. Sci. Paris}, 350(3-4):121--124, 2012.

\bibitem[Pas17]{pasten}
Hector Pasten.
\newblock Definability of {F}robenius orbits and a result on rational distance
  sets.
\newblock {\em Monatsh. Math.}, 182(1):99--126, 2017.

\bibitem[SdZ10]{sz}
J.~Solymosi and F.~de~Zeeuw.
\newblock On a question of {E}rd{\H{o}}s and {U}lam.
\newblock {\em Discrete Comput. Geom.}, 43(2):393--401, 2010.

\bibitem[Sha18]{shaffaf}
Jafar Shaffaf.
\newblock A solution of the {E}rd{\H{o}}s-{U}lam problem on rational distance
  sets assuming the {B}ombieri-{L}ang conjecture.
\newblock {\em Discrete Comput. Geom.}, 60(2):283--293, 2018.

\bibitem[Tao14]{tao}
Terence Tao.
\newblock The {E}rd{\H{o}}s-{U}lam problem, varieties of general type, and the
  {B}ombieri-{L}ang conjecture, 2014.
\newblock
  \url{https://terrytao.wordpress.com/2014/12/20/the-erdos-ulam-problem-varieties-of-general-type-and-the-bombieri-lang-conjecture/}.

\end{thebibliography}


\end{document}